\documentclass[final,hidelinks,onefignum,onetabnum]{siamart251104}

\usepackage{mathtools,amssymb}

\usepackage{graphicx}
\usepackage{epstopdf}
\usepackage{algorithmic}
\ifpdf
\DeclareGraphicsExtensions{.eps,.pdf,.png,.jpg}
\else
\DeclareGraphicsExtensions{.eps}
\fi

\crefformat{equation}{\textup{#2(#1)#3}}
\Crefformat{equation}{\textup{#2(#1)#3}}
\crefrangeformat{equation}{\textup{#3(#1)#4--#5(#2)#6}}
\Crefrangeformat{equation}{\textup{#3(#1)#4--#5(#2)#6}}
\crefmultiformat{equation}{\textup{#2(#1)#3}}{ and \textup{#2(#1)#3}}{, \textup{#2(#1)#3}}{, and \textup{#2(#1)#3}}
\Crefmultiformat{equation}{\textup{#2(#1)#3}}{ and \textup{#2(#1)#3}}{, \textup{#2(#1)#3}}{, and \textup{#2(#1)#3}}
\crefrangemultiformat{equation}{\textup{#3(#1)#4--#5(#2)#6}}{ and \textup{#3(#1)#4--#5(#2)#6}}{, \textup{#3(#1)#4--#5(#2)#6}}{, and \textup{#3(#1)#4--#5(#2)#6}}
\Crefrangemultiformat{equation}{\textup{#3(#1)#4--#5(#2)#6}}{ and \textup{#3(#1)#4--#5(#2)#6}}{, \textup{#3(#1)#4--#5(#2)#6}}{, and \textup{#3(#1)#4--#5(#2)#6}}

\newsiamremark{remark}{Remark}
\newsiamremark{hypothesis}{Hypothesis}
\crefname{hypothesis}{Hypothesis}{Hypotheses}
\newsiamthm{claim}{Claim}
\newsiamremark{fact}{Fact}
\crefname{fact}{Fact}{Facts}

\headers{A Note on the Point-Clothoid Distance Algorithm}{H.~Ye, H.~Ge, and G.~Cheng}

\title{A Note on the Point-Clothoid Distance Algorithm}

\author{Haibin Ye\thanks{H.~Ye and H.~Ge are co-first authors.}
\and Hao Ge\footnotemark[1]
\and Gong Cheng\thanks{Corresponding author (\email{gongch@tongji.edu.cn}).}}

\usepackage{longtable}
\usepackage{subcaption}

\ifpdf
\hypersetup{
  pdftitle={Point-Clothoid Distance},
  pdfauthor={H.~Ye, H.~Ge, G.~Cheng}
}
\fi

\begin{document}

\maketitle

\begin{abstract}
  Computing the closest point on a clothoid is a recurring task in geometric design, road and railway alignment, and path planning. The efficient algorithm of Frego and Bertolazzi addresses this problem, but its candidate-selection analysis assumes at most one local minimum per search interval. We exhibit admissible configurations with two local minima, raising the question of whether the existing strategy accounts for every possible minimum. Using the geometry of the clothoid evolute, we prove that, for any query point and any proper no-inflection planar clothoid segment with tangent-angle variation at most $2\pi$, the squared-distance function has at most three stationary points; if all three are local extrema, their order is min-max-min. This establishes the completeness of the original candidate-selection logic beyond the one-minimum premise. It also shows that no interior search is needed when neither endpoint derivative test is active, allowing unnecessary midpoint searches to be omitted while retaining numerical fallback. Numerical experiments demonstrate reductions in iteration count and evaluation time.
\end{abstract}

\begin{keywords}
  clothoid, Euler spiral, point-clothoid distance, stationary points, evolute
\end{keywords}

\begin{MSCcodes}
  65D17, 65H05, 65H20, 65S05
\end{MSCcodes}

\section{Introduction}

Clothoids are planar curves whose curvature varies linearly with arc length. Their smooth curvature transitions make them useful in geometric design, road and railway alignment, and curvature-constrained path planning \cite{VAZQUEZMENDEZ2024geometric,meek1992clothoid,bakolas2009path}. These applications repeatedly require solving the point-clothoid distance problem: given a query point $q$, determine its closest point on a clothoid segment.

Frego and Bertolazzi~\cite{frego2019point} developed a comprehensive and efficient algorithm for this problem. Their method exploits the geometry of clothoids to restrict the search interval and employs a Newton-like iteration with carefully selected initial candidates. It provides a highly effective approach for point-clothoid distance and projection computation.

The candidate-selection strategy of \cite{frego2019point} is supported by an analysis that assumes at most \emph{one local minimum} candidate on each no-inflection interval with tangent-angle variation less than $2\pi$. We exhibit an admissible configuration with two local minima. We then prove the sharp upper bound that, for every proper no-inflection segment with tangent-angle variation at most $2\pi$, the squared-distance function has at most three stationary points; if all three are local extrema, they occur in min--max--min order. This count and classification provide a complete justification for the candidate-selection logic of Algorithm~A.7 in \cite{frego2019point} (hereafter, the \emph{distance algorithm}) without the one-minimum assumption, including the two-minimum case. Moreover, when neither endpoint derivative tests activates an endpoint search, we prove the no interior local minimum exists. Hence the midpoint is unnecessary as an additional candidate in this derivative-based branch. This geometric redundancy should be distinguished from the midpoint's seperate role as a numerical fallback when an activated endpoint iteration fails; our analysis does eliminate that safeguarding role.


To establish the stationary-point bound, we identify stationary points of the squared-distance function with tangent lines of the clothoid evolute passing through the query point. We divide the evolute into four segments with tangent-angle variation $\pi/2$, represent each as the graph of a strictly convex function in suitable affine frames, and combine the segment-wise bounds while correcting for shared endpoints. The same framework covers the endpoint-degenerate case of zero initial curvature.

The remainder of this manuscript is organized as follows.
\Cref{sec:Clothoid and Evolute} introduces the clothoid and its evolute and
establishes the correspondence between clothoid normals and evolute tangents.
\Cref{sec:main} proves the sharp stationary-point bound, classifies the
three-extrema case, and derives a reduced candidate-selection strategy with numerical fallback.
\Cref{sec:experiments} presents the numerical comparison of the original and
streamlined algorithms. Finally, \Cref{sec:conclusions} summarizes the main
results and their algorithmic implications.

\section{Clothoid and Its Evolute}
\label{sec:Clothoid and Evolute}
Let $\boldsymbol{p}(s)=(x(s),y(s))^{\mathsf{T}}$, $s\in[0,L]$, be an
arc-length-parametrized clothoid with tangent angle
$\theta(s)=ks^2/2+k_0s+\theta_0$ and curvature
$\kappa(s)=\theta'(s)=ks+k_0$. Then
\begin{equation}
  \label{eq:clothoid_arc_parametrization}
  \boldsymbol{p}(s)=\boldsymbol{p}(0)+
  \int_0^s
  \begin{pmatrix}\cos\theta(\tau)\\ \sin\theta(\tau)
  \end{pmatrix}
  \,\mathrm{d}\tau.
\end{equation}
A translation and rotation allow us to set
$\boldsymbol{p}(0)=\boldsymbol{0}$ and $\theta_0=0$ without loss of
generality. We call the clothoid proper when $k\neq0$; for $k=0$, it is a
circle or a line, as treated in \cite{frego2019point}.

Let $\mathbf{T}(s)=(\cos\theta(s),\sin\theta(s))^{\mathsf{T}}$ and
$\mathbf{N}(s)=(-\sin\theta(s),\cos\theta(s))^{\mathsf{T}}$. Wherever
$\kappa(s)\neq0$, the evolute, i.e., the locus of centers of curvature, is
$\boldsymbol{e}(s)=\boldsymbol{p}(s)+\mathbf{N}(s)/\kappa(s)$; under
the above normalization, its coordinates are
\begin{equation}
  \label{eq:evolute_arc_parametrization_with_k0}
  x_e(s)=\int_0^s\cos\theta(\tau)\,\mathrm{d}\tau
  -\frac{\sin\theta(s)}{\kappa(s)},\qquad
  y_e(s)=\int_0^s\sin\theta(\tau)\,\mathrm{d}\tau
  +\frac{\cos\theta(s)}{\kappa(s)}.
\end{equation}
\begin{figure}[htbp]
  \centering
  \includegraphics[width=0.4\textwidth]{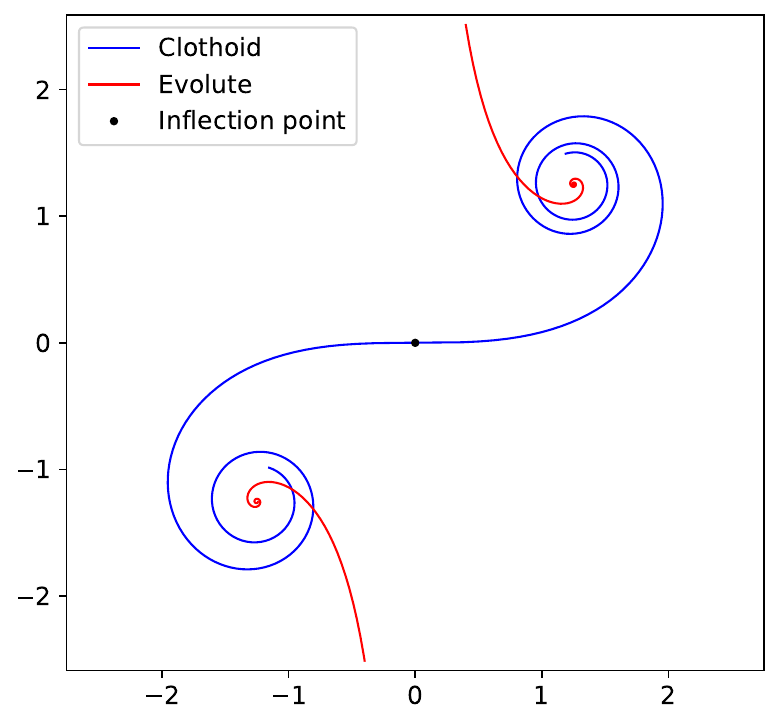}\hspace{1.5em}
  \includegraphics[width=0.312\textwidth]{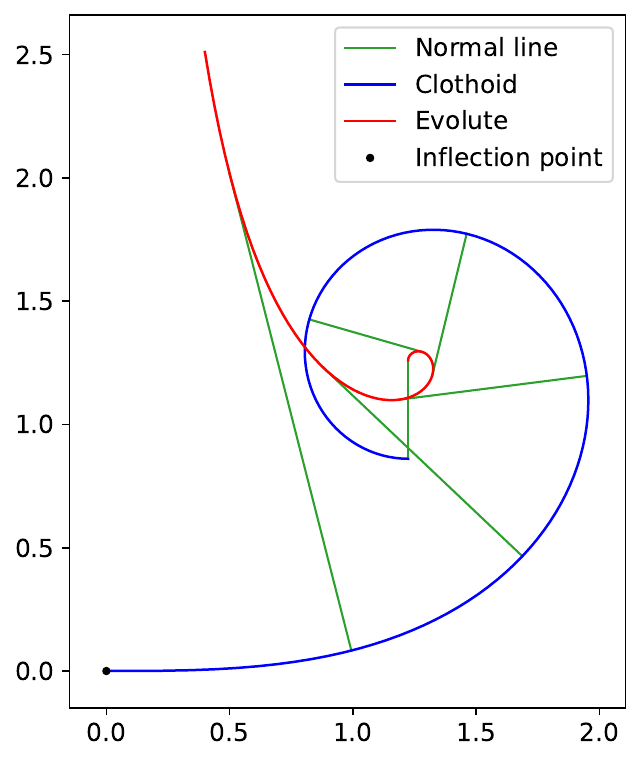}
  \caption{The clothoid (blue) and its evolute (red). Left: the full curve and
    the two evolute branches separated by the inflection point. Right: a
    no-inflection branch with tangent-angle variation $2\pi$; the green lines
  are clothoid normals and evolute tangents at corresponding parameters.}
  \label{fig:clothoid-evolute}
\end{figure}

Since $\boldsymbol{e}'(s)=-k\mathbf{N}(s)/\kappa(s)^2$ wherever
$\kappa(s)\neq0$, the tangent line of evolute at $\boldsymbol{e}(s)$
is the normal line of clothoid at $\boldsymbol{p}(s)$. For $k_0=0$, the
tangent at $s=0$ is understood as a limit. Thus, for any
query point $\boldsymbol{q}$, counting clothoid normal lines through
$\boldsymbol{q}$ is equivalent to counting evolute tangents
through $\boldsymbol{q}$.

\section{Stationary Points of Point-Clothoid Distance Function}
\label{sec:main}

The distance algorithm computes the distance from a query point to a standard clothoid segment with no interior inflection and tangent-angle variation less than $2\pi$. It uses derivative information at the two endpoints to determine which endpoint Newton-like iterations should be attempted. At the candidate-selection level, the midpoint provides an additional initialization when neither endpoint derivative test activates. The same midpoint initialization may also serve as a numerical fallback when at least one endpoint search is activated but none of the activated iterations succeeds. The above strategy is supported by the conclusion in Section~3
of \cite{frego2019point} that such an interval contains at most
one local minimum.
\Cref{fig:counterexample_minmaxmin} nevertheless exhibits an admissible configuration with three stationary points, including two local minima. This raises the question of whether the candidate-selection logic remains complete and motivates a sharp count and classification of the stationary points.
\begin{figure}[htbp]
  \centering
  \includegraphics[width=0.4\textwidth]{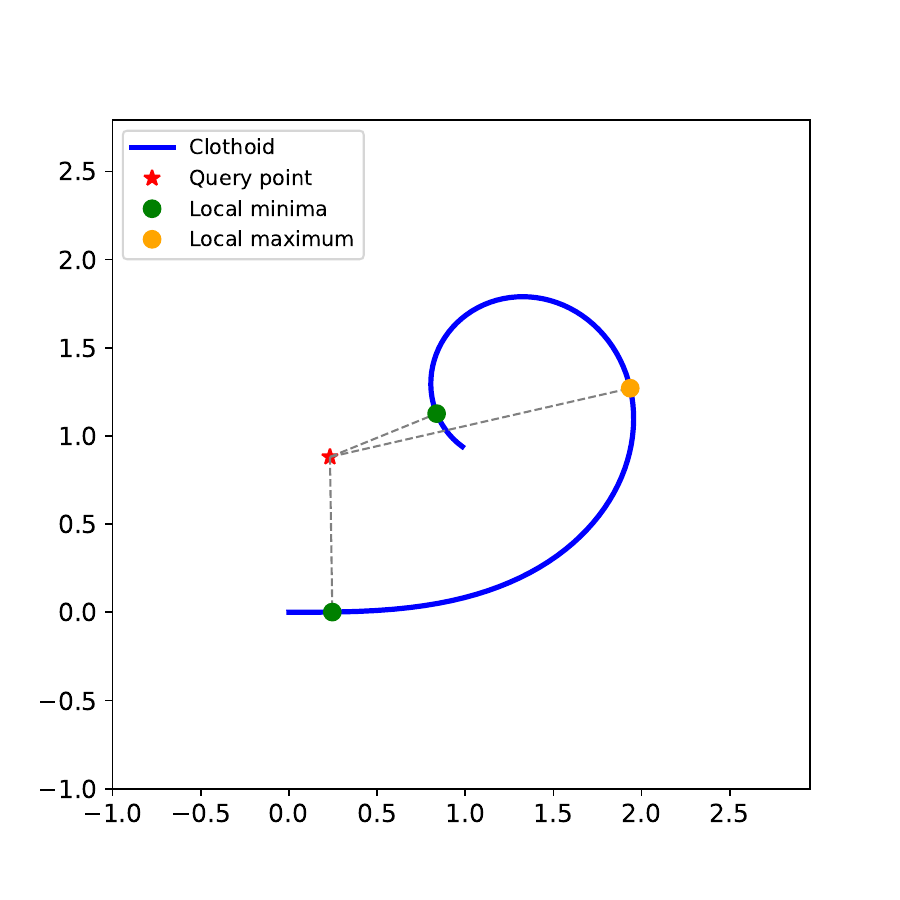}\hspace{0.5em}
  \includegraphics[width=0.4\textwidth]{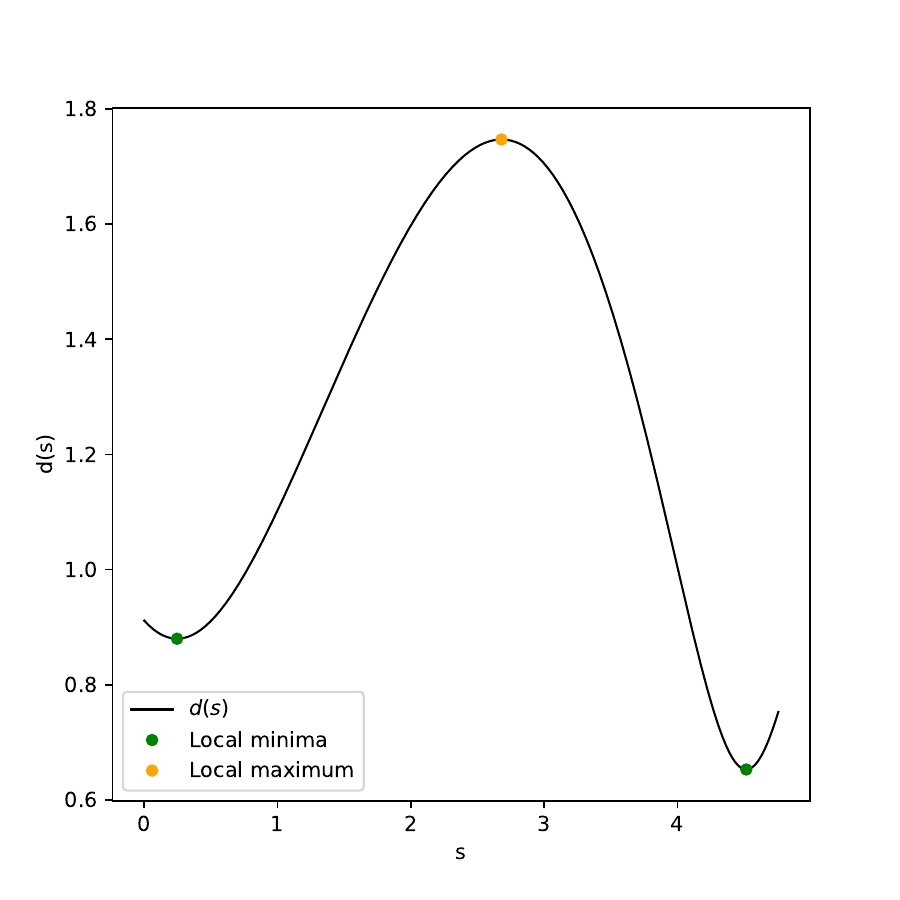}
  \caption{Left: the clothoid, the query point $\boldsymbol{q}=(0.233,0.881)^{\mathsf{T}}$ and the normals at the three stationary points. Right: the distance function $d(s)$. On a clothoid segment with $k=\frac12$, $k_0=0$ and angle variation $1.8\pi<2\pi$, the distance function has three stationary points: two local minima (green) and one local maximum (orange).}
  \label{fig:counterexample_minmaxmin}
\end{figure}

Let $d(s)=\|\boldsymbol{p}(s)-\boldsymbol{q}\|$ and define the smooth
squared-distance function
\begin{equation}
  \label{eq:squared-distance}
  D(s)=\frac{1}{2}d(s)^2,
  \quad
  D'(s)=(\boldsymbol{p}(s)-\boldsymbol{q})^{\mathsf{T}}\mathbf{T}(s).
\end{equation}
For a proper no-inflection segment with $\Delta\theta<2\pi$, Lemmas
3.1--3.3 of \cite{frego2019point} show that stationary points
which are local extrema are simple and isolated, whereas multiple zeros
of $D'$ are stationary inflection points of $D$. In particular, the local
extrema alternate. We now establish the required global bound on the
number of stationary points.

It suffices to consider the normalized $2\pi$ case. The degenerate case
$k=0$, a circle or line, is treated in Section~4 in
\cite{frego2019point}. For $k\neq0$, reversing the parametrization
and, if necessary, reflecting the plane preserve distances and tangency
multiplicities, so we may assume $k>0$ and $k_0\ge0$. Any no-inflection
segment with $\Delta\theta<2\pi$ can then be extended terminally to
$\Delta\theta=2\pi$ without losing stationary points. Finally,
\Cref{eq:squared-distance} shows that $s_\star$ is stationary exactly when
the normal line at $\boldsymbol{p}(s_\star)$ passes through
$\boldsymbol{q}$, equivalently when the tangent line of the evolute at
$\boldsymbol{e}(s_\star)$ passes through $\boldsymbol{q}$. The desired
stationary-point bound therefore reduces to the equivalent tangency bound
stated below.
\begin{theorem}[Number of Stationary Points]
  \label{thm:main}
  Let $\boldsymbol{q}\in\mathbb{R}^2$ and let $\boldsymbol{p}(s)$,
  $s\in[0,s_{2\pi}]$, be a proper clothoid segment satisfying
  $k>0$, $k_0\ge0$, and
  $\theta(s_{2\pi})-\theta(0)=2\pi$. Then $D(s)$ has at most three
  stationary points on $[0,s_{2\pi}]$.
\end{theorem}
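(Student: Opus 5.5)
We follow the strategy announced in the introduction. By the reduction preceding the statement, stationary points $s_\star$ of $D$ correspond one-to-one to parameters at which the tangent line of the evolute $\boldsymbol{e}$ passes through $\boldsymbol{q}$, so it suffices to count such tangencies for $s\in[0,s_{2\pi}]$. Since $k>0$ and $k_0\ge0$, we have $\kappa(s)>0$ on $(0,s_{2\pi}]$. The identity $\boldsymbol{e}'(s)=-k\mathbf{N}(s)/\kappa(s)^2$ shows that $\boldsymbol{e}$ is regular there, with unit tangent $-\mathbf{N}(s)$. Its tangent angle is therefore $\theta(s)+\pi/2$ up to a constant, and it increases strictly and monotonically through a total of $2\pi$. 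Hence the evolute is a locally strictly convex arc whose tangent direction turns by exactly $2\pi$. Its curvature is $\kappa^3/k>0$, and its length $\int k/\kappa^2\,ds$ is finite except possibly near $s=0$ when $k_0=0$; in that case $\boldsymbol{e}$ escapes to infinity with a limiting tangent line.

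\textbf{Step 1 (segmentation).} We split $[0,s_{2\pi}]$ at the parameters $s_1<s_2<s_3$ where $\theta=\pi/2,\pi,3\pi/2$, producing four evolute pieces $E_1,\dots,E_4$. On each piece the tangent angle ranges over a closed interval of length $\pi/2$. Choosing an affine frame whose horizontal axis bisects, or is a limiting direction for, this angular range, each $E_j$ becomes the graph of a strictly convex $C^1$ function $f_j$ on an interval $I_j$; this interval is unbounded for $E_1$ when $k_0=0$. For a strictly convex graph, the tangent line at $x$ passes through $\boldsymbol{q}=(a,b)$ exactly when
\[
g(x):=f_j(x)+f_j'(x)(a-x)-b=0,
\]
and $g'(x)=f_j''(x)(a-x)$ changes sign only at $x=a$. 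Thus $g$ is unimodal, so $\boldsymbol{q}$ lies on at most two tangents of each piece. Moreover, two tangents occur only when $\boldsymbol{q}$ lies strictly "below" the graph, that is, in the region outside the convex side.

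\textbf{Step 2 (combining the pieces).} A naive count gives $8$, minus corrections for shared endpoints. The real work is a global constraint. Because the total turning is $2\pi$, the tangent directions on $E_1\cup E_3$ are antipodal to those on $E_2\cup E_4$ pairwise, and the four convex regions interlock. We will show the following two facts:
\begin{enumerate}
\item If $\boldsymbol{q}$ admits two tangents to $E_j$, then it admits no tangent to the opposite piece $E_{j+2}$. Indeed, $\boldsymbol{q}$ must then lie on the concave side of $E_{j+2}$, using that the evolute of the clothoid spirals inward: $1/\kappa$ decreases, so the osculating circles are nested by the Tait--Kneser theorem.
\item $\boldsymbol{q}$ admits at most one tangent in total to two consecutive pieces, after accounting for double counting at the shared endpoint.
\end{enumerate}
The key tool for both is the nesting of osculating circles. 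Since the radius of curvature $1/\kappa(s)$ is strictly decreasing, the osculating circles of the clothoid are strictly nested. Equivalently, the evolute has monotone curvature, so it does not self-intersect and its tangent lines at angular separation $\pi$ bound disjoint half-planes containing later parts of the curve. We then count: a point outside all convex sides sees at most one tangent per quarter-turn pair, and a careful case analysis over the position of $\boldsymbol{q}$ in the regions cut out by the four pieces and their endpoint tangents yields at most $3$. The endpoint tangents at $s_1,s_2,s_3$ are counted once, which is the "correction for shared endpoints."

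\textbf{Main obstacle.} We expect Step 2 to be the hardest part, specifically the proof that the total never reaches $4$. We will first try an equivalent formulation: the support-type function $h(\phi)$, measuring the signed distance from $\boldsymbol{q}$ to the evolute tangent of direction $\phi$, has at most three zeros on $[\phi_0,\phi_0+2\pi]$. Here $h(\phi)=\langle \boldsymbol{e}(\phi)-\boldsymbol{q},\mathbf{T}(\phi)\rangle$, up to sign, which is exactly $D'$. We also have $h'+\,$(rotation term) $=\rho_e(\phi)$, the radius of curvature of the evolute, which is strictly monotone. A Sturm/Rolle-type argument on $h''+h=\rho_e'$, which has constant sign, would bound the zeros: between consecutive zeros of $h$ separated by less than $\pi$ something must happen. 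Combining this with the quarter-turn convex-graph bounds from Step 1 should give the count $3$. The case $k_0=0$ is handled as a limit, using the limiting tangent at $s=0$.
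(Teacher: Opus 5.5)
\textbf{Step~2 does not hold up.} Your Step~1 matches the paper's starting point: each quarter of the evolute is a strictly convex graph, so at most two of its tangents pass through $\boldsymbol{q}$. The whole difficulty lies in Step~2, and both facts you propose there are false.

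\emph{Fact~2} fails. Applied to $E_1\cup E_2$ and to $E_3\cup E_4$, it would bound the total by $2$. That contradicts the three stationary points of \Cref{fig:counterexample_minmaxmin}, which persist after terminal extension to $2\pi$. It also contradicts your own Step~1, since a single piece can already carry two tangents.

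\emph{Fact~1} fails as well. Take $k=\tfrac12$, $k_0=0$, $\boldsymbol{q}=(0.5,1.5)^{\mathsf{T}}$. From Fresnel values I estimate $D'\approx-0.50,\ 0.13,\ -0.40,\ -0.83,\ 0.20$ at $\theta=0,\tfrac14,\tfrac{\pi}{2},\pi,\tfrac{3\pi}{2}$. Hence $\boldsymbol{q}$ lies on two tangents of $E_1$ and on one tangent of the opposite piece $E_3$. This is consistent with the paper's enumeration, which contains a region with segment bounds $(2,0,1,0)$.

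So the nesting heuristic does not deliver the global constraint you need. The case analysis you defer is the actual content of the paper's proof:
\begin{itemize}
\item region-dependent bounds $0,1,2$ for each quarter, in both endpoint frames (\Cref{tab:tangents});
\item the relative positions of the division points $\boldsymbol{e}(s_\beta)$, which produce Cases~A--C;
\item the junction corrections $\delta_j$;
\item an exhaustive enumeration showing $\widehat N\le3$.
\end{itemize}
None of this is supplied, so Step~2 does not establish the bound.

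\textbf{The Sturm fallback is closer to a proof, but misstated and insufficient as written.} Write $h(\theta)$ for $D'$ at tangent angle $\theta$ and take $k_0>0$. Then $h''+h=\frac{d}{d\theta}\kappa^{-1}=-k/\kappa^{3}$. This is minus the evolute's radius of curvature $\rho_e$ itself, not $\rho_e'$.

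Using only the sign of the right-hand side, the identity $\int_a^b(h''+h)\sin(\theta-a)\,d\theta=h'(b)\sin(b-a)$ and its mirror with $\sin(b-\theta)$ give the following. If $a<b$ are zeros with $b-a\le\pi$, then in fact $b-a<\pi$ and $h'(a)>0>h'(b)$. Hence no three zeros lie in a window of length $\le\pi$. But this still allows a short--long--short pattern of four zeros in $[0,2\pi]$.

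The missing ingredient is that $\rho_e=k/\kappa^3$ is strictly decreasing in $\theta$. Suppose $z_1<z_2<z_3<z_4$ are zeros in $[0,2\pi]$. Pairing $\theta$ with $\theta-\pi$ gives $\int_{z_1}^{z_4}(h''+h)\sin(\theta-z_1)\,d\theta<0$. This forces $\sin(z_4-z_1)<0<h'(z_4)$, and hence $z_4-z_3>\pi$. Together with $z_3-z_1>\pi$, this contradicts $z_4-z_1\le2\pi$.

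Completed this way, with care at $\theta=0$ when $k_0=0$, the Sturm route would be a much shorter alternative to the paper's enumeration. It would also give the min--max--min order directly. Your proposal, however, has neither the correct equation nor this monotonicity argument, so as written the bound of three is not proved.
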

\addtocounter{theorem}{-1}
\renewcommand{\thetheorem}{\arabic{section}.\arabic{theorem}$^\prime$}
\begin{theorem}
  \label{thm:main-prime}
  Let $\boldsymbol{q}\in\mathbb{R}^2$ and let $\boldsymbol{p}(s)$,
  $s\in[0,s_{2\pi}]$, be a proper clothoid segment satisfying
  $k>0$, $k_0\ge0$, and
  $\theta(s_{2\pi})-\theta(0)=2\pi$. Let $\boldsymbol{e}(s)$ denote its
  evolute wherever $\kappa(s)\neq0$. Then at most three parameters
  $s\in[0,s_{2\pi}]$ have an evolute tangent line passing through
  $\boldsymbol{q}$; when $k_0=0$, the tangent at $s=0$ is the limiting line
  as $s\to0^+$.
\end{theorem}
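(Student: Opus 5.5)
We work directly with the evolute $\boldsymbol{e}(s)$ on $(0,s_{2\pi}]$, which is a curve whose tangent direction is $\mp\mathbf{N}(s)$. Since $\boldsymbol{e}'(s)=-k\mathbf{N}(s)/\kappa(s)^2$ with $k>0$, the evolute is regular and its unit tangent $-\mathbf{N}(s)$ rotates monotonically with angle $\theta(s)-\pi/2$. The total turning is therefore exactly $2\pi$, and the evolute is locally strictly convex. We split $[0,s_{2\pi}]$ at the parameters $s_1<s_2<s_3$ where $\theta=\pi/2,\pi,3\pi/2$. This gives four arcs $E_1,\dots,E_4$, each with tangent-angle variation $\pi/2$.

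On each arc the tangent direction stays in a closed quarter-turn. In the affine frame whose axes are the chord direction of that quarter (the bisecting direction) and its normal, each $E_j$ is the graph of a strictly convex $C^1$ function $f_j$ over an interval, with slopes ranging over $[-1,1]$. For $E_1$ with $k_0=0$, the point $s=0$ is at infinity in the sense that $\boldsymbol{e}(s)\to\infty$. We handle it by noting that $\kappa\to0$ makes $\boldsymbol{e}(s)-\boldsymbol{p}(s)$ blow up along $\mathbf{N}(0)$, so the arc is the graph of a convex function on a half-line whose slope tends to the limit slope. The limiting tangent line is then the normal line of $\boldsymbol{p}$ at $0$, i.e.\ the vertical asymptote direction, and it is counted once.

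For a strictly convex graph, a point $\boldsymbol{q}$ lies on the tangent at abscissa $t$ iff $g(t)=f(t)+f'(t)(q_x-t)-q_y=0$. Since $g'(t)=f''(t)(q_x-t)$ (or, without $C^2$, monotonicity of $f'$), $g$ increases for $t<q_x$ and decreases for $t>q_x$. Hence each arc carries \emph{at most two} tangents through $\boldsymbol{q}$, and two occur only when $\boldsymbol{q}$ lies strictly below the graph, i.e.\ on the convex side. This gives a crude bound of $8$, minus corrections for shared endpoints $s_1,s_2,s_3$.

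The main work is sharpening this to $3$. I would use a global tangent-angle argument rather than per-arc counting. Parametrize tangents by their direction $\phi=\theta(s)-\pi/2\in[-\pi/2,3\pi/2]$ and consider the support-type function $h(\phi)=\langle \boldsymbol{e}(s(\phi))-\boldsymbol{q},\mathbf{N}^\perp\rangle$. Zeros of $h$ are exactly the tangents through $\boldsymbol{q}$, and $h(\phi)=\langle\boldsymbol{p}(s)-\boldsymbol{q},\mathbf{T}(s)\rangle=D'(s)$. Differentiating in $\phi$ gives $dh/d\phi=\langle \boldsymbol{e}-\boldsymbol{q},\cdot\rangle$-type terms. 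The plan is to show that $h+h''$ equals the radius of curvature of the evolute, $\rho_e=k/\kappa^3>0$, which is strictly decreasing in $\phi$ because $\kappa$ increases. A function $h$ on an interval of length $2\pi$ with $h+h''=\rho$, where $\rho$ is positive and strictly monotone, has at most three zeros. This is a Sturm-type result: write $h=a\cos\phi+b\sin\phi+\int\rho(\psi)\sin(\phi-\psi)\,d\psi$. Four zeros in a $2\pi$ window would force, by a Sturm--Hurwitz/four-vertex style orthogonality argument, $\rho$ to have a sign change of its derivative. Concretely, pick the trigonometric combination $\sin(\phi-\alpha)$ vanishing at the two middle zeros and integrate $\rho'$ against it to get a contradiction.

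The per-arc convex-graph counting serves as the backup route if the Sturm argument fails. In that case one case-splits on which arcs have $\boldsymbol{q}$ on their convex side. Because the evolute spirals outward (its curvature radius decreases, so the evolute is an involute-free inward spiral), $\boldsymbol{q}$ can be on the convex side of at most the two arcs that are adjacent in a suitable sense. I expect this step, excluding a fourth tangent, to be the obstacle. For the endpoint case $k_0=0$, the monotonicity of $\rho_e$ still holds on $(0,s_{2\pi}]$ with $\rho_e\to\infty$, so the same integral argument applies with an improper integral.
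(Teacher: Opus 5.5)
There is a genuine gap, and it sits at the decisive step. Your setup is sound. With $\theta$ as parameter, $h=D'$ is the support function of the evolute about $\boldsymbol q$, and its zeros are exactly the tangencies. It does satisfy a forced-oscillator equation, although with $h=D'$ one gets $h+h''=-k/\kappa^3$, so you should work with $-h$; that sign is harmless. The lemma you need is also true: at most three zeros on a closed window of length $2\pi$ when the forcing is positive and strictly decreasing. The mechanism you give for it fails, however, and that lemma is the whole content of the theorem.

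\begin{itemize}
\item A single combination $\sin(\phi-\alpha)$ has zeros exactly $\pi$ apart. It cannot in general be made to vanish at the two middle zeros.
\item The Sturm--Hurwitz/four-vertex orthogonality argument needs the boundary terms $[h'w-hw']$ to cancel over a full period. That happens only for a closed curve. Here the evolute is not closed; for instance $h(2\pi)-h(0)=x(s_{2\pi})>0$.
\item For $k_0=0$ it is worse: $\rho=k/\kappa^3\sim\theta^{-3/2}$, so $\int\rho$ and $\int\rho'$ diverge at $\theta=0$. The variation-of-parameters formula based at $0$ does not exist, and calling it an improper integral does not help unless the weight vanishes at $\theta=0$.
\end{itemize}

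Your backup route stops exactly where the paper's proof does its real work. The paper also cuts the evolute into four strictly convex quarter arcs with a per-arc bound of two. It then needs the superimposed plane subdivisions (Cases A--C, fixed by the relative positions of the points $\boldsymbol e(s_\beta)$), the junction corrections $\delta_j$, and an exhaustive region-by-region enumeration to reach $\widehat N\le 3$. Your remark about the convex side of at most two adjacent arcs is unproved. Even granted, it would still allow totals such as $2+1+1$.

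The missing idea is to anchor the Wronskian at a single zero. Put $\tilde h=-h$ and $\rho=k/\kappa^3>0$, which is strictly decreasing in $\theta$. For zeros $\phi_i<\phi_j$ in $[0,2\pi]$, integrate $(\tilde h'w-\tilde h w')'=\rho w$ with $w=\sin(\phi-\phi_i)$:
\[
  \tilde h'(\phi_j)\sin(\phi_j-\phi_i)=\int_{\phi_i}^{\phi_j}\rho(\psi)\sin(\psi-\phi_i)\,d\psi>0 .
\]
Positivity for $\pi<\phi_j-\phi_i\le 2\pi$ follows by pairing $\psi$ with $\psi+\pi$ and using the monotonicity of $\rho$. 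The integral converges even when $\phi_i=0$ and $k_0=0$, because $w$ vanishes there.

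Hence every zero after the first is simple, and $\mathrm{sign}\,\tilde h'(\phi_j)=\mathrm{sign}\sin(\phi_j-\phi_i)$ for \emph{every} earlier zero $\phi_i$. Now suppose there are four zeros. The first three lie either all in $(\phi_4-\pi,\phi_4)$ or all in $(\phi_4-2\pi,\phi_4-\pi)$, so in either case $\phi_3-\phi_1<\pi$. That forces $\tilde h'>0$ both at $\phi_2$ and at the first zero after $\phi_2$, which is impossible.

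With this step supplied, your route becomes a genuinely different and much shorter proof than the paper's four-segment decomposition and table enumeration. The same identity also gives the min--max--min order of three local extrema directly.
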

\renewcommand{\thetheorem}{\arabic{section}.\arabic{theorem}}
We prove \Cref{thm:main-prime}. Since
$\kappa(s)=\theta'(s)=k_0+ks$ with $k>0$ and $k_0\ge0$, the only cases
are $k_0>0$ and the endpoint-degenerate case $k_0=0$; the latter will be
recovered by a limiting argument at the end of the subsection.

\subsection{Number of Evolute Tangents for Nonnegative Initial Curvature}

We begin with the nondegenerate case $k_0>0$.
Let $s_\alpha$ satisfy $\theta(s_\alpha)=\alpha$ and define the four
closed evolute segments
$\Gamma_i=\boldsymbol{e}([s_{(i-1)\pi/2},s_{i\pi/2}])$,
$i=1,\ldots,4$. The evolute and clothoid have the same tangent-angle
variation, so each $\Gamma_i$ spans $\pi/2$. In either affine frame
\[
  [\boldsymbol{e}(s_{(i-1)\pi/2});
    -\mathbf{N}(s_{(i-1)\pi/2}),
  \mathbf{T}(s_{(i-1)\pi/2})]
  \quad\text{or}
  \quad  [\boldsymbol{e}(s_{i\pi/2});
    \mathbf{N}(s_{i\pi/2}),
  \mathbf{T}(s_{i\pi/2})],
\]
the segment is the graph of a function
$f:[0,M]\to\mathbb{R}$, $0<M<\infty$. The frame construction gives
$f(0)=f'(0)=0$ and $\lim_{x\to M^-}f'(x)=\infty$; it remains only to prove
that $f$ is strictly convex on $(0,M)$.
\begin{figure}[htbp]
  \centering
  \includegraphics[width=0.4\textwidth]{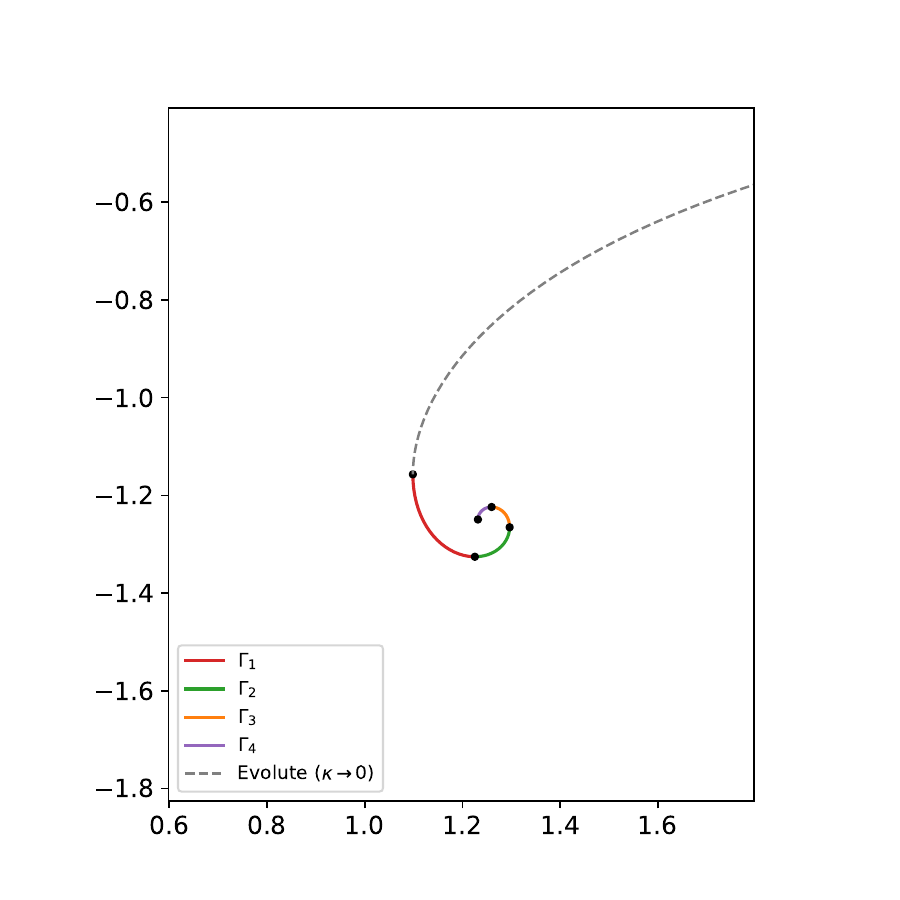}\hspace{0.5em}%
  \includegraphics[width=0.4\textwidth]{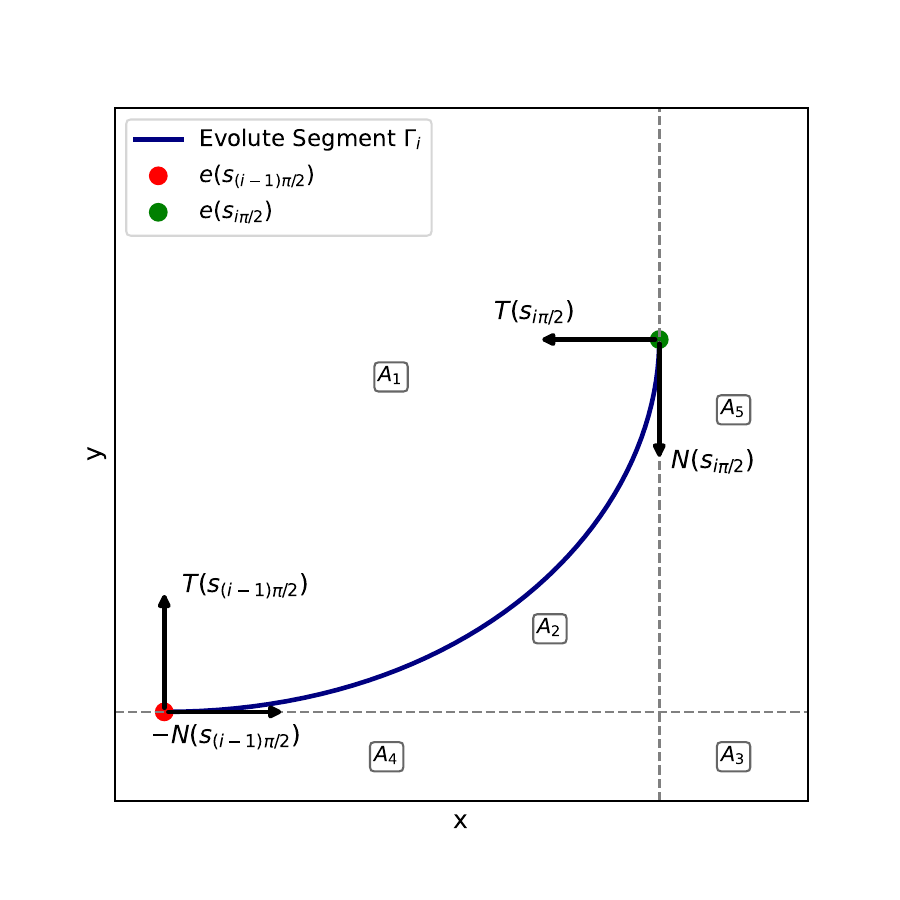}
  \caption{Left: after rotation placing the initial evolute tangent
    parallel to the negative $y$-axis, the evolute is divided at
    $s_\beta$, $\beta=0,\pi/2,\pi,3\pi/2,2\pi$, into four segments
    $\Gamma_1$--$\Gamma_4$; the dashed $\kappa\to0$ branch tends to
    infinity in the positive $x$-direction after rotation.
    Right: $\Gamma_i$ represented as a strictly convex graph in its two
    affine frames. The graph and its endpoint tangents (dashed) partition
  the plane into regions $A_1$--$A_5$.}
  \label{fig:evolute-segmentation}

\end{figure}
\begin{lemma}
  \label{lem:convexity}
  A function $f$ constructed as above is strictly convex on $(0,M)$.
\end{lemma}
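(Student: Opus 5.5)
The plan is to parametrize $\Gamma_i$ explicitly in each affine frame and show that $f'$ is a strictly increasing function of $x$. The key input is the formula $\boldsymbol{e}'(s)=-k\mathbf{N}(s)/\kappa(s)^2$ from \Cref{sec:Clothoid and Evolute}. Since $k>0$ and $k_0>0$, we have $\kappa(s)=k_0+ks>0$ on the whole closed interval $[s_a,s_b]$, where $s_a=s_{(i-1)\pi/2}$ and $s_b=s_{i\pi/2}$. Hence $\boldsymbol{e}$ is smooth there and its velocity is a positive multiple of $-\mathbf{N}(s)$. The evolute's tangent direction therefore rotates with angle $\theta(s)-\pi/2$, which is strictly increasing because $\theta'=\kappa>0$.

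\emph{First frame.} Use origin $\boldsymbol{e}(s_a)$ and axes $-\mathbf{N}(s_a)$, $\mathbf{T}(s_a)$, and put $\varphi(s)=\theta(s)-\theta(s_a)\in[0,\pi/2]$. A direct computation gives $-\mathbf{N}(s)\cdot(-\mathbf{N}(s_a))=\cos\varphi$ and $-\mathbf{N}(s)\cdot\mathbf{T}(s_a)=\sin\varphi$. The frame coordinates of $\boldsymbol{e}(s)$ therefore satisfy
\[
  x'(s)=\frac{k\cos\varphi(s)}{\kappa(s)^2},\qquad
  y'(s)=\frac{k\sin\varphi(s)}{\kappa(s)^2}.
\]
Since $x'>0$ on $[s_a,s_b)$, the map $s\mapsto x$ is a strictly increasing bijection onto $[0,M]$. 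Here $M=\int_{s_a}^{s_b}x'\,\mathrm{d}s$ is positive and finite, because the integrand is bounded and positive on a set of positive measure. Thus $\Gamma_i$ is the graph of some $f$ with $f(0)=0$. By the chain rule, $f'(x)=y'/x'=\tan\varphi(s(x))$. This is strictly increasing in $x$, equals $0$ at $x=0$, and tends to $\infty$ as $x\to M^-$, since $\varphi\to\pi/2$. Strict monotonicity of $f'$ on $(0,M)$ gives strict convexity. One can also compute $f''(x)=\varphi'(s)\sec^2\varphi\,/x'(s)=\kappa^3/(k\cos^3\varphi)>0$.

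\emph{Second frame.} Use origin $\boldsymbol{e}(s_b)$ and axes $\mathbf{N}(s_b)$, $\mathbf{T}(s_b)$, and traverse the segment backward, $\sigma=s_b-s$. With $\psi=\theta(s_b)-\theta(s)\in[0,\pi/2]$, the backward velocity is a positive multiple of $\mathbf{N}(s)$. Its components are $\mathbf{N}(s)\cdot\mathbf{N}(s_b)=\cos\psi$ and $\mathbf{N}(s)\cdot\mathbf{T}(s_b)=\sin\psi$, so it equals $\frac{k}{\kappa^2}(\cos\psi,\sin\psi)$ in frame coordinates. Moreover $\psi$ increases strictly with $\sigma$. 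The same argument then gives $f'=\tan\psi$ strictly increasing from $0$ to $\infty$.

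The only delicate point is checking the sign conventions of the two frames, so that the components come out as $(\cos,\sin)$ of an angle running over $[0,\pi/2]$ rather than with a reflected sign. I would verify these dot products carefully; the rest is routine. The endpoint behaviour $f'\to\infty$ is only a limit, since $x'$ vanishes exactly at the far endpoint. Strict convexity is asserted only on the open interval, so this causes no difficulty.
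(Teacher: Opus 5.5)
Your proof is correct and follows essentially the same route as the paper. Both arguments differentiate the evolute via $\boldsymbol{e}'=-k\mathbf{N}/\kappa^2$, express the frame velocity as $\frac{k}{\kappa^2}(\cos\varphi,\sin\varphi)$ in terms of the relative tangent angle, and read off $f'=\tan\varphi$ and $f''=\kappa^3/(k\cos^3\varphi)>0$. The paper displays only $\Gamma_2$ in the first frame, where $\varphi=\theta-\pi/2$ yields its $-\cot\theta$ and $\kappa^3/(k\sin^3\theta)$, and asserts the other segment--frame pairs by shifting or reversing the angle; your general-$i$, two-frame computation makes that step explicit.
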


\begin{proof}
  For every segment--frame pair, the same calculation follows after shifting
  or reversing the local tangent angle. It therefore suffices to display it
  for $\Gamma_2$ in the frame
  $[\boldsymbol{e}(s_{\pi/2});-\mathbf{N}(s_{\pi/2}),
  \mathbf{T}(s_{\pi/2})]$. Let
  $(x_\gamma(s),y_\gamma(s))$, $s\in[s_{\pi/2},s_\pi]$, denote its
  coordinates. Since $\theta(s)=ks^2/2+k_0s$ and
  $\kappa(s)=ks+k_0$,
  \begin{equation}
    \label{eq:derivative of para_gamma_2}
    x_\gamma'(s)=\frac{k\sin\theta(s)}{\kappa(s)^2},
    \qquad
    y_\gamma'(s)=-\frac{k\cos\theta(s)}{\kappa(s)^2}.
  \end{equation}
  For $s\in(s_{\pi/2},s_\pi)$, $x_\gamma'(s)>0$, and hence
  \begin{equation}
    \label{derivative of f}
    f'(x)=\frac{y_\gamma'(s)}{x_\gamma'(s)}=-\cot\theta(s),
    \qquad
    f''(x)=\frac{\kappa(s)^3}{k\sin^3\theta(s)}>0.
  \end{equation}
  Thus $f$ is strictly convex on $(0,M)$.
\end{proof}

Next, we examine the segment-wise bound of evolute tangents for the five regions in
\Cref{fig:evolute-segmentation}. The following conclusions in \Cref{prop:y<0,prop:y>f(x),prop:all-points,cor:convex-extension} are required for the bound analysis, the proofs of which are left in the appendix.

\begin{proposition}
  \label{prop:y<0}
  Let $a\in\mathbb{R}$, $b\in[-\infty,a)$, and
  $f\in C^2((b,a))$ satisfy
  \[
    f'(x)>0,\qquad f''(x)>0,\qquad
    \lim_{x\to b^+}f(x)=\lim_{x\to b^+}f'(x)=0,\qquad
    \lim_{x\to a^-}f'(x)=\infty.
  \]
  If $b$ is finite, set $f(b)=0$; if
  $\lim_{x\to a^-}f(x)<\infty$, extend $f$ continuously to $a$.
  Then every point $\boldsymbol{q}=(q_x,q_y)^{\mathsf{T}}$ with $q_y<0$ lies on
  at most one tangent line to the graph of $f$. If also $q_x>a$, no such
  tangent line exists.
\end{proposition}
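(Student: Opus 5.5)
We reduce the tangency question to the zeros of a single-variable function that turns out to be strictly monotone. For $t\in(b,a)$ the tangent line to the graph of $f$ at $(t,f(t))$ is $y=f(t)+f'(t)(x-t)$. A point $\boldsymbol{q}$ lies on this line exactly when $g(t)=0$, where
\[
  g(t):=f(t)+f'(t)(q_x-t)-q_y .
\]
Tangent lines at an endpoint (where defined) are handled as limits at the end.

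\textbf{Step 1: monotonicity of $g$.} Differentiating gives $g'(t)=f''(t)(q_x-t)$. Since $f''>0$, $g$ is strictly increasing on $(b,\min\{q_x,a\})$ and strictly decreasing on $(\max\{q_x,b\},a)$. The task is therefore to show that $g$ has a sign on the increasing part and on the decreasing part that forces at most one zero overall.

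\textbf{Step 2: the increasing branch.} Here $t<q_x$. We study $g$ on $(b,\min\{q_x,a\})$ through its limit at $b$.
\begin{itemize}
  \item If $b$ is finite, then as $t\to b^+$ we have $f\to0$ and $f'\to0$ with $q_x-t$ bounded, so $g(t)\to -q_y>0$.
  \item If $b=-\infty$, the term $f'(t)(q_x-t)$ needs more care. Convexity gives $f(t)\ge f(t)+f'(t)(u-t)$ only in one direction, so I would instead use that the tangent line lies below the graph. Writing $f'(t)(q_x-t)=f'(t)(q_x-u)+f'(t)(u-t)$ for a fixed $u$ with $u<q_x$ and $t<u$, we get $f(t)+f'(t)(u-t)\ge0$, because $f\ge0$ and $f'\ge0$. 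Hence $g(t)\ge f'(t)(q_x-u)-q_y>-q_y>0$.
\end{itemize}
In both cases $g(t)\ge -q_y>0$ near $b$, since $f>0$ and $f'(t)(q_x-t)>0$. Because $g$ is increasing on this interval, $g>0$ throughout it. Indeed, more directly, for every $t<q_x$ we have $g(t)=f(t)+f'(t)(q_x-t)-q_y>0$ term by term, since $f>0$ and $f'>0$ on $(b,a)$. So there are no zeros with $t\le q_x$. This term-by-term positivity already settles the branch and makes the case split unnecessary.

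\textbf{Step 3: the decreasing branch.} Zeros can occur only for $t\in(\max\{q_x,b\},a)$. On this interval $g$ is strictly decreasing, so it has at most one zero there, and hence at most one tangent line passes through $\boldsymbol{q}$.

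\textbf{Step 4: the case $q_x>a$.} If $q_x>a$, the decreasing branch is empty, so every $t\in(b,a)$ satisfies $t<q_x$ and Step 2 gives $g>0$. No tangent exists.

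\textbf{Step 5: endpoint tangents.} At $b$ (when finite) the tangent is the line $y=0$, which does not contain $\boldsymbol{q}$ because $q_y<0$. At $a$ the tangent is the vertical line $x=a$, provided $\lim_{x\to a^-} f(x)<\infty$. It can contain $\boldsymbol{q}$ only if $q_x=a$, and then the decreasing branch is empty, so the count stays at most one. When $q_x>a$ the vertical line misses $\boldsymbol{q}$.

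\textbf{Main obstacle.} The only delicate point is the careful bookkeeping of the endpoint and limiting tangents, particularly the vertical tangent at $a$. It must not double count with an interior zero, which holds because an interior zero requires $t>q_x=a$, which is impossible.
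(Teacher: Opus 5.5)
Your proof is correct, but it is organized differently from the paper's.

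The paper argues by contradiction on pairs of tangents. If interior tangents at $x_1<x_2$ both pass through $\boldsymbol{q}$, eliminating $q_x$ gives $q_y=\frac{f'(x_1)f'(x_2)}{f'(x_2)-f'(x_1)}\,[\phi(x_2)-\phi(x_1)]$. Here $\phi(x)=x-f(x)/f'(x)$ is the $x$-intercept of the tangent at $x$, and $\phi'=ff''/(f')^2>0$ forces $q_y>0$. The vertical tangent $x=a$ is then handled separately: it meets every interior tangent at ordinate $f(x_1)+f'(x_1)(a-x_1)>0$. The case $q_x>a$ is handled by the same term-by-term positivity you use in Step~2.

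You instead fix $\boldsymbol{q}$ and study the single function $g$, whose derivative $f''(t)(q_x-t)$ makes it unimodal. Positivity for $t\le q_x$ and strict decrease for $t>q_x$ give uniqueness. The by-product that every tangency parameter satisfies $t>q_x$ settles both the case $q_x>a$ and the possible double count with $x=a$ in one stroke.

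The paper's route yields a $\boldsymbol{q}$-independent fact: any two interior tangent lines meet strictly above the $x$-axis. It also introduces the intercept map $\phi$, which it reuses in the proof of \Cref{prop:all-points}. Yours is a single computation that also localizes the tangency point.

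When writing it up, drop the $b=-\infty$ bullet of Step~2. Its first sentence is muddled, and the whole case split is superseded by your term-by-term remark. Also state explicitly that $f>0$ on $(b,a)$ because $f'>0$ and $\lim_{x\to b^+}f(x)=0$.
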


\begin{proposition}
  \label{prop:y>f(x)}
  Under the assumptions of \Cref{prop:y<0}, if
  $q_x\in[b,a)$ and $q_y>f(q_x)$, no tangent line to the graph of
  $f$ passes through $\boldsymbol{q}$.
\end{proposition}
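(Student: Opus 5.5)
We argue by contradiction: suppose some tangent line to the graph of $f$ passes through $\boldsymbol{q}$, and write it as $\ell_t(x)=f(t)+f'(t)(x-t)$ for some $t\in(b,a)$, with the limiting tangent at $t=b$ when $b$ is finite, namely the $x$-axis. The key fact is strict convexity: for $f''>0$ on $(b,a)$ the graph lies on or above every tangent line,
\[
  f(x)\ \ge\ \ell_t(x)\quad\text{for all } x\in(b,a),
\]
with equality only at $x=t$. The claim then amounts to showing that $q_y\le f(q_x)$ for every point on any tangent line above $[b,a)$, with no room for $q_y>f(q_x)$.

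\emph{Step 1 (interior tangents, $t\in(b,a)$).} Suppose $\boldsymbol{q}\in\ell_t$, so $q_y=\ell_t(q_x)$. If $q_x\in(b,a)$, the inequality above gives $q_y=\ell_t(q_x)\le f(q_x)$, which contradicts $q_y>f(q_x)$. If $q_x=b$ with $b$ finite, we use continuity: since $f(b)=0=\lim_{x\to b^+}f(x)$ and $\ell_t$ is affine, letting $x\to b^+$ in $f(x)\ge\ell_t(x)$ gives $\ell_t(b)\le f(b)$, again a contradiction.

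\emph{Step 2 (the endpoint tangent at $b$).} When $b$ is finite, the limiting tangent is the line $y=0$. From $f'>0$ on $(b,a)$ and $f(b)=0$, we get $f\ge 0$ on $[b,a)$. Hence $q_y>f(q_x)\ge0$, so $\boldsymbol{q}$ is not on this line. When $b=-\infty$ there is no endpoint tangent to consider.

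\emph{Step 3 (the endpoint $a$).} At $x=a$ we have $f'\to\infty$, so the tangent there is vertical, the line $x=a$. Since $q_x<a$, the point $\boldsymbol{q}$ does not lie on it.

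The only delicate point is which tangent lines count. We must treat the endpoint tangents at $b$ and $a$ as limits, and check the boundary case $q_x=b$ through continuity of $f$ at $b$. The rest is the standard supporting-line property of convex functions.
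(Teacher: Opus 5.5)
Your proof is correct and follows the paper's argument. The supporting-line inequality $f(q_x)\ge f(x_0)+f'(x_0)(q_x-x_0)$ excludes every nonvertical tangent, and the limiting vertical tangent $x=a$ is excluded by $q_x<a$; your separate handling of the horizontal tangent at $b$ (via $f\ge0$) and of the case $q_x=b$ (via continuity) only makes explicit what the paper folds into convexity on $[b,a)$.
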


\begin{corollary}
  \label{cor:convex-extension}
  Suppose $b> -\infty$ and extend $f$ by
  \begin{equation}
    \label{eq:F(x)}
    F(x)=\left\{
      \begin{aligned}
        0, &\quad x<b,\\
        f(x), &\quad b\le x<a.
      \end{aligned}
      \right.
    \end{equation}
    Then $F$ is convex on $(-\infty,a)$. Consequently, if $q_x<a$ and
    $q_y>F(q_x)$, no tangent line to the graph of $F$ passes through
    $\boldsymbol{q}$.
  \end{corollary}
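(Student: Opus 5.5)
The plan is to prove convexity of $F$ through its derivative, and then to use the supporting-line property of convex functions to rule out tangents through points above the graph.

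\textbf{Step 1: $F$ is $C^1$ on $(-\infty,a)$ with nondecreasing derivative.} On $(-\infty,b)$ we have $F\equiv0$, so $F'\equiv0$ there. On $(b,a)$ we have $F=f$, so $F'=f'>0$, and $f'$ is strictly increasing because $f''>0$. At the junction $x=b$ I will use the hypotheses of \Cref{prop:y<0}: $\lim_{x\to b^+}f(x)=0=F(b)$ and $\lim_{x\to b^+}f'(x)=0$.

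\begin{itemize}
\item These limits give continuity of $F$ at $b$.
\item For the right derivative at $b$, apply the mean value theorem on $[b,x]$: $\bigl(f(x)-f(b)\bigr)/(x-b)=f'(\xi_x)\to0$. So the right derivative is $0$, and it matches the left derivative $0$.
\end{itemize}

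Hence $F\in C^1((-\infty,a))$, with $F'$ nondecreasing (constant $0$, then increasing from $0$). A differentiable function with nondecreasing derivative is convex; the standard argument is the mean value theorem applied to difference quotients. This proves the first claim. As a by-product, $F\ge0$ everywhere.

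\textbf{Step 2: supporting lines.} Fix any point $x_0\in(-\infty,a)$. The tangent line to the graph of $F$ at $x_0$ is
\[
\ell_{x_0}(x)=F(x_0)+F'(x_0)(x-x_0).
\]
This covers three kinds of tangent: the horizontal line $y=0$ when $x_0\le b$, and the tangents to the graph of $f$ when $x_0\in(b,a)$. By convexity and differentiability, $F(x)\ge\ell_{x_0}(x)$ for all $x<a$.

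Now suppose $\boldsymbol{q}$ lies on $\ell_{x_0}$. Since $q_x<a$, this gives
\[
q_y=\ell_{x_0}(q_x)\le F(q_x),
\]
which contradicts $q_y>F(q_x)$.

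\textbf{Step 3: the endpoint $a$.} If $f$ extends continuously to $a$, the only extra tangent line is the limiting one at $x=a$. Because $f'(x)\to\infty$, this limiting line is the vertical line $x=a$. It cannot contain $\boldsymbol{q}$, because $q_x<a$. Hence no tangent line to the graph of $F$ passes through $\boldsymbol{q}$.

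\textbf{Expected obstacle.} I expect the main care to be needed at the two boundary points. At $x=b$, one must verify differentiability, not just continuity, so that the horizontal tangent of the flat part and the tangents of $f$ come from a single convex $C^1$ function. At $x=a$, the vertical tangent has to be handled separately, since it is not of the form $\ell_{x_0}$. The rest is the standard supporting-line property of convex functions.
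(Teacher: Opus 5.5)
Your proof is correct and follows the same route as the paper. You show that the flat extension is convex because $f(b)=0$ and the one-sided derivative at $b$ vanishes, then apply the supporting-line argument of \Cref{prop:y>f(x)}, treating the vertical limiting tangent at $x=a$ separately. Your mean-value-theorem check that $F$ is genuinely $C^1$ at $b$ spells out what the paper compresses into the phrase ``$f(b)=f'(b)=0$''.
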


  \begin{proposition}
    \label{prop:all-points}
    Under the assumptions of \Cref{prop:y<0}, at most two tangent lines to
    the graph of $f$ pass through any point $\boldsymbol{q}\in\mathbb{R}^2$.
    If $q_y=0$ and either $b> -\infty$ with $q_x\le b$, or $q_x>a$, at
    most one tangent line passes through $\boldsymbol{q}$.
  \end{proposition}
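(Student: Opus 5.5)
We prove \Cref{prop:all-points} by parametrizing tangent lines by their point of tangency and counting zeros of a single scalar function. For $t\in(b,a)$ the tangent line at $(t,f(t))$ passes through $\boldsymbol{q}$ exactly when
\[
  g(t):=f(t)+f'(t)(q_x-t)-q_y=0 .
\]
Differentiating gives $g'(t)=f''(t)(q_x-t)$. Since $f''>0$, $g$ is strictly increasing on $(b,\min\{q_x,a\})$ and strictly decreasing on $(\max\{q_x,b\},a)$. In other words, $g$ is strictly unimodal with its peak at $t=q_x$ (or monotone if $q_x\notin(b,a)$). A strictly unimodal function has at most two zeros, one on each monotone branch. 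This gives the first claim for interior tangency points.

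The endpoints need separate treatment. If $b$ is finite and $f(b)=0$, the tangent at $b$ is the limiting line $y=0$, which is the continuous limit $g(b^+)=-q_y$. At $a$, the tangent is vertical whenever $f(a)$ is finite; it corresponds to $x=a$, i.e. the limit of $g(t)/f'(t)\to q_x-a$. I would rescale by setting $h(t)=g(t)/\sqrt{1+f'(t)^2}$, the signed distance from $\boldsymbol{q}$ to the tangent line. This $h$ extends continuously to the closed parameter range and has the same sign pattern as $g$. On the branch $t>q_x$, $h$ is monotone, because $g$ decreases while $g$ and $h$ share zeros and signs. An endpoint zero therefore just sits at the end of one monotone branch. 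Hence the total count over $[b,a]$ (with endpoints included) is still at most two. The main obstacle is this step. One must check that the endpoint tangents cannot create a third solution, and that $h$ is not constant zero on an interval. Strict convexity rules out the latter, since distinct tangency points give distinct tangent lines.

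For the second claim take $q_y=0$. If $q_x\le b$ with $b$ finite, then $g'(t)<0$ throughout $(b,a)$, so $g$ is strictly decreasing from $g(b^+)=0$. Thus $g<0$ on the interior and the only tangent line through $\boldsymbol{q}$ is the endpoint line $y=0$. The vertical line $x=a$ does not pass through $\boldsymbol{q}$ since $q_x\le b<a$. If $q_x>a$, then $g$ is strictly increasing on $(b,a)$ with $g(b^+)=0$. So either $b$ is finite and the single tangent is $y=0$, or $b=-\infty$ and $g>0$, giving no interior zero. The vertical tangent at $a$ misses $\boldsymbol{q}$ because $q_x\ne a$. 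In both cases at most one tangent line passes through $\boldsymbol{q}$, consistent with \Cref{prop:y<0}, whose monotone-branch argument is the same.
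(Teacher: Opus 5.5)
Your route differs from the paper's. The paper argues convex-geometrically. It takes the closure of the epigraph of $f$, extended by $0$ to the left of $b$ when $b$ is finite, which is a closed convex set. Every tangent line is a supporting line of it, including $y=0$ at $b$ and the limiting vertical line $x=a$. \Cref{lem:aux-lemma} (radial projection into an open semicircle) then gives at most two supporting lines through an exterior point, a unique one through a boundary point, and none through an interior point. The special case follows because every interior tangent meets the $x$-axis at $\phi(x_0)=x_0-f(x_0)/f'(x_0)\in(b,a)$.

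Your identity $g'(t)=f''(t)(q_x-t)$ is more elementary and counts tangency parameters directly. It even places the two interior tangency points on opposite sides of $t=q_x$. Your interior count and your special case are correct. For $b=-\infty$ and $q_x>a$ you do not need $g(-\infty)=0$, since $g(t)=f(t)+f'(t)(q_x-t)>0$ outright. What the epigraph argument buys is a uniform treatment of the limiting endpoint tangents, and that is exactly where your proof has a gap.

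The endpoint step, which you yourself flag, does not work as written. The claim that $h$ is monotone on $t>q_x$ is false, and sharing zeros and signs with $g$ cannot transfer monotonicity. One computes $h'(t)=f''(t)(1+f'(t)^2)^{-3/2}\bigl[(q_x-t)+f'(t)(q_y-f(t))\bigr]$. For $q_x\in(b,a)$ and $q_y=f(t_1)$ with $t_1\in(q_x,a)$, the bracket is positive just to the right of $q_x$ and negative on $(t_1,a)$.

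More to the point, the picture of an endpoint zero sitting at the end of its own monotone branch is wrong at $t=a$. On the decreasing branch $h(a)=q_x-a<0$. The vertical tangent contains $\boldsymbol{q}$ only when $q_x=a$, and then there is no decreasing branch at all. That zero terminates the same increasing branch that starts at $b$. For finite $b$ and $\boldsymbol{q}=(a,0)$, one has $g>0$ on $(b,a)$ but $h(b)=h(a)=0$. So one branch carries two zeros, and $h$ does not share the sign pattern of $g$ at $a$.

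The bound two is still true, but it needs an explicit case split. For finite $b$, $\boldsymbol{q}$ lies on the horizontal tangent iff $g(b^+)=-q_y=0$; strict monotonicity then forbids an interior zero on the branch adjacent to $b$. For $f(a)<\infty$, $\boldsymbol{q}$ lies on the vertical tangent iff $q_x=a$, which makes $(b,a)$ a single increasing branch. This gives four cases:
\begin{itemize}
\item neither endpoint tangent passes through $\boldsymbol{q}$: at most two interior zeros;
\item only the horizontal one does: that tangent plus at most one zero on the far branch;
\item only the vertical one does: that tangent plus at most one interior zero;
\item both do: just the two endpoint tangents.
\end{itemize}
Distinct parameters give distinct lines by strict convexity, so this yields the claim.
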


  Applying \Cref{prop:y<0,prop:y>f(x),prop:all-points,cor:convex-extension}
  in the two endpoint affine frames gives the segment-wise bounds in
  \Cref{tab:tangents}.

  \begin{table}[htbp]
    \footnotesize
    \caption{Segment-wise upper bounds for the number of tangents through a query point
      in the close regions and their boundaries of \Cref{fig:evolute-segmentation}; each
    result is applied in the relevant endpoint affine frame. Each set in this table is understood without its boundary: the rows listing a single region give the cells $A_i$ without their boundaries, the rows listing a two-region intersection give the edges without their endpoints, and the endpoints appear only in the last row.}
    \label{tab:tangents}
    \begin{center}
      \begin{tabular}{|c|c|c|} \hline
        \textbf{Region} & \textbf{Upper bound} & \textbf{Justification} \\ \hline
        $A_1$ & 0 & \Cref{prop:y>f(x)}, in both frames \\
        $A_2$ & 2 & \Cref{prop:all-points} \\
        $A_3$ & 0 & \Cref{prop:y<0} ($q_x > a$, $q_y < 0$) \\
        $A_4$ & 1 & \Cref{prop:y<0}, frame at $\boldsymbol{e}(s_{(i-1)\pi/2})$ \\
        $A_5$ & 1 & \Cref{prop:y<0}, frame at $\boldsymbol{e}(s_{i\pi/2})$ \\ \hline
        $A_1 \cap A_2$ (graph of $f$) & 1 & unique tangent at the point \\
        $A_1 \cap A_4$ or $A_1 \cap A_5$ & 1 & special case of \Cref{prop:all-points} \\
        $A_2 \cap A_4$ or $A_2 \cap A_5$ & 2 & \Cref{prop:all-points} \\
        $A_3 \cap A_4$ or $A_3 \cap A_5$ & 1 & special case of \Cref{prop:all-points} \\ \hline
        $A_2 \cap A_3 \cap A_4 \cap A_5$ & 2 & \Cref{prop:all-points} \\ \hline
      \end{tabular}
    \end{center}
  \end{table}

  To combine these segment-wise bounds, we first determine the relative
  positions of the division points $\boldsymbol{e}(s_\beta)$,
  $\beta=0,\pi/2,\pi,3\pi/2,2\pi$.

  Direct calculation gives
  $y_e(s_0)>y_e(s_\pi)$,
  $x_e(s_{\pi/2})<x_e(s_{3\pi/2})$, and
  $y_e(s_\pi)<y_e(s_{2\pi})$. The remaining comparisons are linked: using
  $\theta$ as the integration variable and pairing the positive and negative
  cosine lobes shows that $x_e(s_{\pi/2})<x_e(s_{2\pi})$ implies
  $y_e(s_0)>y_e(s_{3\pi/2})$.
  Thus only three cases occur: Case A has the two displayed inequalities;
  Case B has both reversed; and Case C has
  $y_e(s_0)>y_e(s_{3\pi/2})$ and
  $x_e(s_{\pi/2})>x_e(s_{2\pi})$. Equality in either comparison introduces
  no additional region, since the resulting boundary sets are already
  covered by the non-strict bounds in \Cref{tab:tangents}.

  Since the segments $\Gamma_i$ are closed, a tangency at an internal
  division parameter is counted in both adjacent segment bounds. Define
  \begin{equation}
    \delta_j(\boldsymbol{q})=
    \begin{cases}
      1, & \boldsymbol{q}\text{ lies on the tangent line at }\boldsymbol{e}(s_{j\pi/2}),\\
      0, & \text{otherwise},
    \end{cases}
    \qquad j=1,2,3.
    \label{eq:junction-correction}
  \end{equation}
  This correction removes only the duplicate occurrence of the same
  division parameter, not tangencies at distinct parameters.

  Superimpose the plane divisions induced by the four segments and let
  $u_i(\boldsymbol{q})$ be the bound for $\Gamma_i$ from
  \Cref{tab:tangents}. We define
  \begin{equation}
    \widehat N(\boldsymbol{q}):=
    \sum_{i=1}^{4}u_i(\boldsymbol{q})-
    \sum_{j=1}^{3}\delta_j(\boldsymbol{q}),
    \label{eq:total-tangent-bound}
  \end{equation}
  which is no less than the actual number of evolute tangents, $N(\boldsymbol{q})$, i.e., $N(\boldsymbol{q})\leq\widehat N(\boldsymbol{q})$.
  \begin{lemma}
    \label{lem:global-tangency-count}
    In all Cases A--C, 
    $\widehat N(\boldsymbol{q})\leq3$ for every
    $\boldsymbol{q}\in\mathbb{R}^2$. Moreover,
    $\widehat N(\boldsymbol{q})=3$ only in the closed strip
    \begin{equation}
      x(0)\leq q_x\leq x(s_{2\pi}).
      \label{eq:three-tangency-strip}
    \end{equation}
  \end{lemma}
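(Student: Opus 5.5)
Our plan is a direct case analysis on the superimposed plane partition, organized around where each segment contributes the value $2$. From \Cref{tab:tangents}, $u_i(\boldsymbol{q})\in\{0,1,2\}$, and $u_i=2$ happens only when $\boldsymbol{q}$ lies in the closure of region $A_2$ for $\Gamma_i$. This region is the set between the convex arc $\Gamma_i$ and its two endpoint tangent lines, on the convex side and below the arc. So we first fix the geometry. In the rotated frame of \Cref{fig:evolute-segmentation}, the endpoint tangents of $\Gamma_i$ are the clothoid normals at $s_{(i-1)\pi/2}$ and $s_{i\pi/2}$. Consecutive segments share the tangent line at $\boldsymbol{e}(s_{j\pi/2})$, and these lines are alternately horizontal and vertical, namely the normals at $\theta=0,\pi/2,\pi,3\pi/2,2\pi$. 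Every region boundary is therefore an axis-parallel line or an evolute arc. We then use the relative order of the division points in Cases A--C to locate the four $A_2$-type regions. Each is a curvilinear quadrant-like cell bounded by two consecutive normals.

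The key step shows that no point lies in the interior of the $A_2$-regions of two non-adjacent segments. We also show that a point lying in the $A_2$-regions of two adjacent segments $\Gamma_i,\Gamma_{i+1}$ lies on their common tangent line at $\boldsymbol{e}(s_{i\pi/2})$, so that $\delta_i=1$. For $\Gamma_1$ and $\Gamma_3$ this follows from the inequality $y_e(s_0)>y_e(s_\pi)$ together with the opposite orientations of their convex sides. The pair $\Gamma_2,\Gamma_4$ is handled the same way with $x_e(s_{\pi/2})<x_e(s_{3\pi/2})$. Adjacent $A_2$-regions lie on opposite sides of the shared normal, because the evolute turns by $\pi/2$ per segment. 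We then bound the remaining segments. For $\boldsymbol{q}$ in the $A_2$-region of $\Gamma_i$, we check using the $A_3,A_4,A_5$ rows and \Cref{prop:y<0} that the other three segments contribute at most $1$ in total, after the $\delta$-corrections. A point in $A_2$ of $\Gamma_2$ is typically in $A_1$ of $\Gamma_1$ or $\Gamma_3$, contributing $0$ by \Cref{prop:y>f(x)} or \Cref{cor:convex-extension}, or in $A_3$, contributing $0$. When no segment contributes $2$, we show that at most three of the $u_i$ equal $1$. The reason is that $A_4$ and $A_5$ for successive segments are half-strips whose pairwise intersections are controlled by the same division-point inequalities.

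The strip statement follows by tracking where the value $3$ is attained. Outside $x(0)\le q_x\le x(s_{2\pi})$, the point lies beyond the vertical normal lines at $\theta=0$ or $\theta=2\pi$, since in the rotated frame these normals are vertical through $\boldsymbol{p}(0)$ and $\boldsymbol{p}(s_{2\pi})$. There, \Cref{prop:y<0} with $q_x>a$, together with the special cases of \Cref{prop:all-points}, forces $u_1=0$ or $u_4=0$. Combined with the above bound, this gives $\widehat N\le2$.

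The main obstacle is the bookkeeping across Cases A--C. The Case C ordering, with $x_e(s_{\pi/2})>x_e(s_{2\pi})$, lets the $A_4/A_5$ strips of $\Gamma_1$ and $\Gamma_4$ overlap differently. We would handle it first by drawing the arrangement of the five axis-parallel normals and the four arcs. We would then enumerate the cells of the arrangement and tabulate $(u_1,\dots,u_4,\delta_1,\delta_2,\delta_3)$ for each cell, edge and vertex. Boundary points are covered by the non-strict rows of \Cref{tab:tangents}.
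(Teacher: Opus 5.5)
You propose finishing by enumerating every cell, edge and vertex of the arrangement and tabulating $(u_1,\dots,u_4,\delta_1,\delta_2,\delta_3)$. That is exactly the paper's proof (the worksheet in \Cref{tab:tangents_all_case}), and it is what actually carries the lemma. The structural shortcuts you put in front of it are where the problems lie.

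\textbf{The strip argument fails.} Take $q_x<x(0)=0$ with $y_e(s_{\pi/2})<q_y<y_e(s_{3\pi/2})$.
\begin{itemize}
\item In the frame at $\boldsymbol{e}(s_0)$ the local ordinate is $q_x<0$ and the local abscissa is below $M$. So $\boldsymbol{q}\in A_4^{(1)}$ and $u_1=1$.
\item Since also $q_x<x_e(s_{2\pi})$ and $q_y<y_e(s_{3\pi/2})$, \Cref{prop:y<0} in the frame at $\boldsymbol{e}(s_{2\pi})$ gives $u_4=1$.
\end{itemize}
This is the cell $\Omega_{12}$, with $(u_1,u_2,u_3,u_4)=(1,0,0,1)$. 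So ``$u_1=0$ or $u_4=0$'' is false there; $\widehat N=2$ holds only because $u_2=u_3=0$.

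The inference is also invalid where the premise does hold. Your earlier bounds give only $\widehat N\le3$. The cells $\Omega_6'$, with $(0,2,0,1)$, and $\Omega_{15}'$, with $(2,0,1,0)$, have $u_1=0$ and $u_4=0$ respectively, yet both attain $\widehat N=3$.

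On the left of the strip, a correct argument is a band count. For $q_x<0$, exactly two segments have $u_i=1$ in each band:
\begin{itemize}
\item $\Gamma_2,\Gamma_4$ below $y_e(s_{\pi/2})$;
\item $\Gamma_1,\Gamma_4$ in the middle band;
\item $\Gamma_1,\Gamma_3$ above $y_e(s_{3\pi/2})$.
\end{itemize}
The other two segments lie in $A_1$ or $A_3$, and $\delta_1,\delta_3$ absorb the double count on the two separating lines.

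On the right, you must show that the other three segments vanish on $A_2^{(4)}$, which lies entirely in $q_x>x(s_{2\pi})$. In Cases B and C you must also show this on $A_2^{(1)}\cap\{q_x>x(s_{2\pi})\}$. These are the cells $\Omega_4$ and $\Omega_{15}''$, and handling them needs positional information about the arcs, not the dichotomy you state.

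\textbf{Two further justifications are wrong.} Write $X_j=x_e(s_{j\pi/2})$ and $Y_j=y_e(s_{j\pi/2})$.
\begin{itemize}
\item You have swapped the separating inequalities. One has $A_2^{(1)}\subset(0,X_1)\times(Y_1,Y_0)$ and $A_2^{(3)}\subset(X_3,X_2)\times(Y_2,Y_3)$. These are separated by $X_1<X_3$, i.e.\ $x_e(s_{\pi/2})<x_e(s_{3\pi/2})$. The inequality $y_e(s_0)>y_e(s_\pi)$ that you invoke is precisely the condition for their $y$-ranges to overlap. Symmetrically, $A_2^{(2)}\subset(X_1,X_2)\times(Y_1,Y_2)$ and $A_2^{(4)}\subset(X_4,X_3)\times(Y_4,Y_3)$ are separated by $y_e(s_\pi)<y_e(s_{2\pi})$. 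The inequality $x_e(s_{\pi/2})<x_e(s_{3\pi/2})$ is what makes their $x$-ranges overlap.
\item Adjacent $A_2$-regions do not lie on opposite sides of the shared tangent line. The two endpoint frames at $\boldsymbol{e}(s_{i\pi/2})$ share the second axis $\mathbf{T}(s_{i\pi/2})$, and both $A_2$-regions have positive second coordinate. So they lie on the \emph{same} side of the shared tangent line and are separated instead by the line through $\boldsymbol{e}(s_{i\pi/2})$ perpendicular to it. Their closures meet only at the junction, where each segment contributes $1$ (graph row of \Cref{tab:tangents}), so no doubled $2$ ever arises.
\end{itemize}

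Your remaining claims, such as ``typically in $A_1$'' and ``at most three $u_i$ equal $1$'', are true but merely asserted. Establishing them is exactly the enumeration.
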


  \begin{proof}
    Superimposing the four segment-wise partitions in
    \Cref{fig:plane-division}, applying \Cref{tab:tangents} on every region
    and boundary, and subtracting the junction corrections in
    \cref{eq:junction-correction} gives \cref{eq:total-tangent-bound}; the
    exhaustive enumeration in \Cref{sec:supp-tangency-enumeration} and \Cref{tab:tangents_all_case} of the appendix shows that $\widehat N\le3$, with equality only in the strip
    \cref{eq:three-tangency-strip}, and \Cref{fig:stationary-heatmaps} there gives the corresponding
    sampled counts.
  \end{proof}
  \begin{figure}[htbp]
    \centering
    \setlength{\tabcolsep}{4pt}
    \begin{tabular}{ccc}
      \footnotesize case A & \footnotesize case B & \footnotesize case C \\
      \includegraphics[width=0.31\textwidth]{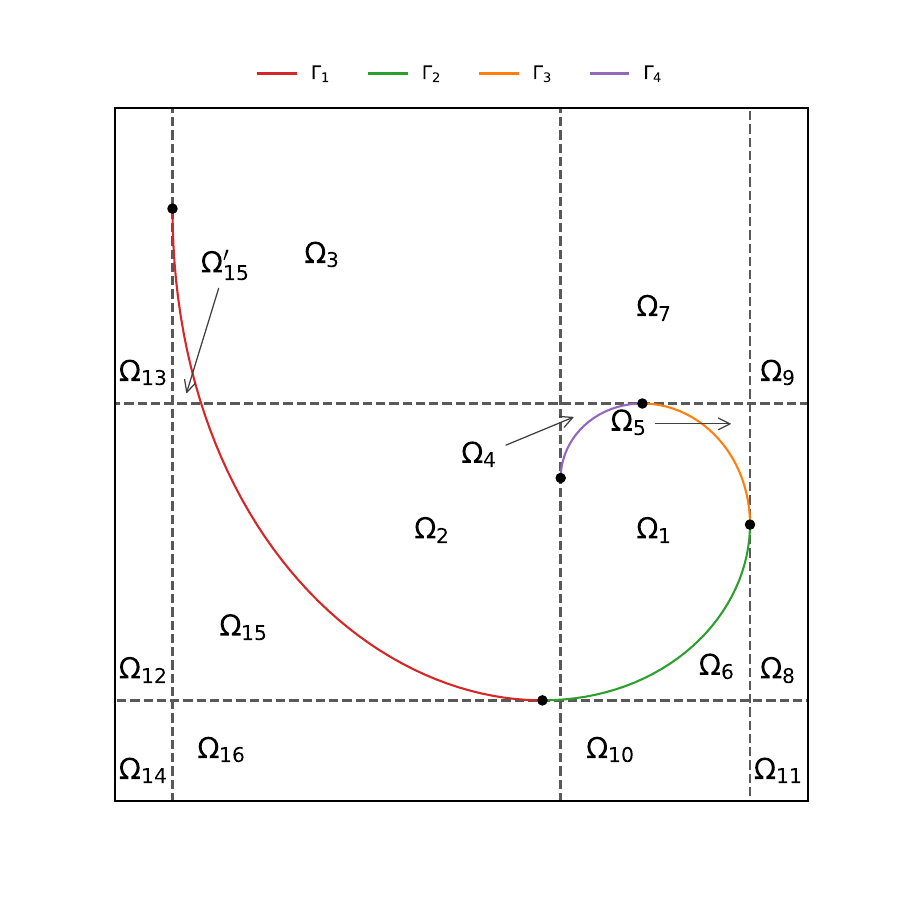}
      & \includegraphics[width=0.31\textwidth]{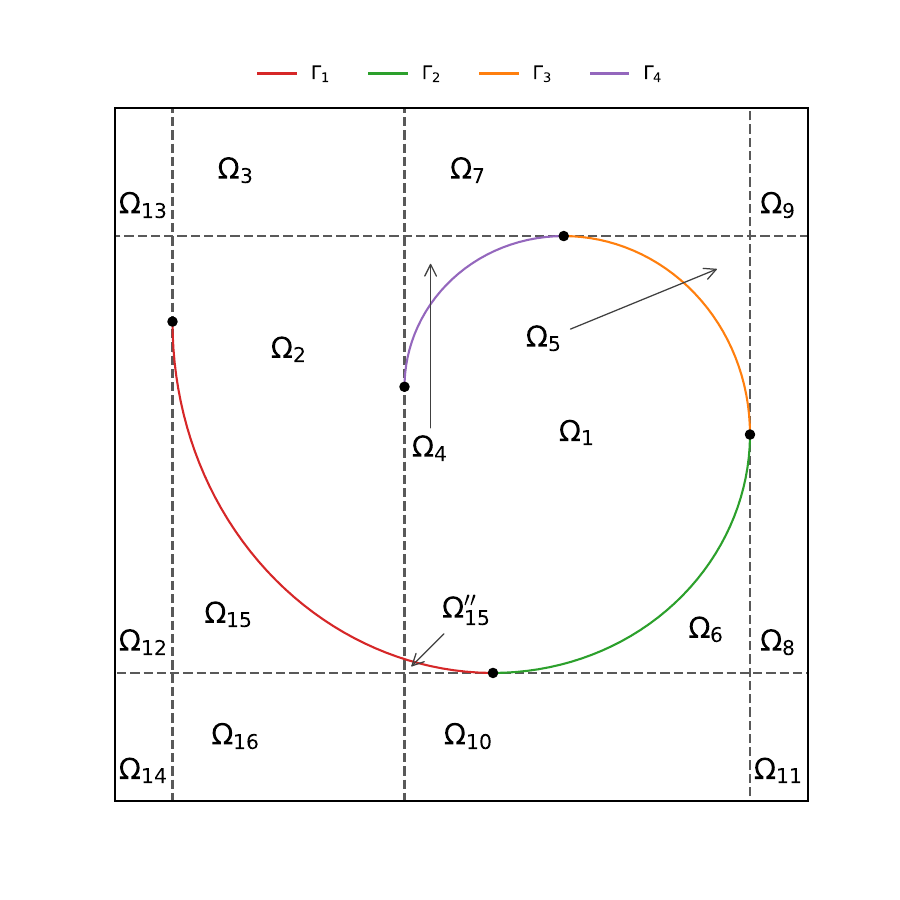}
      & \includegraphics[width=0.31\textwidth]{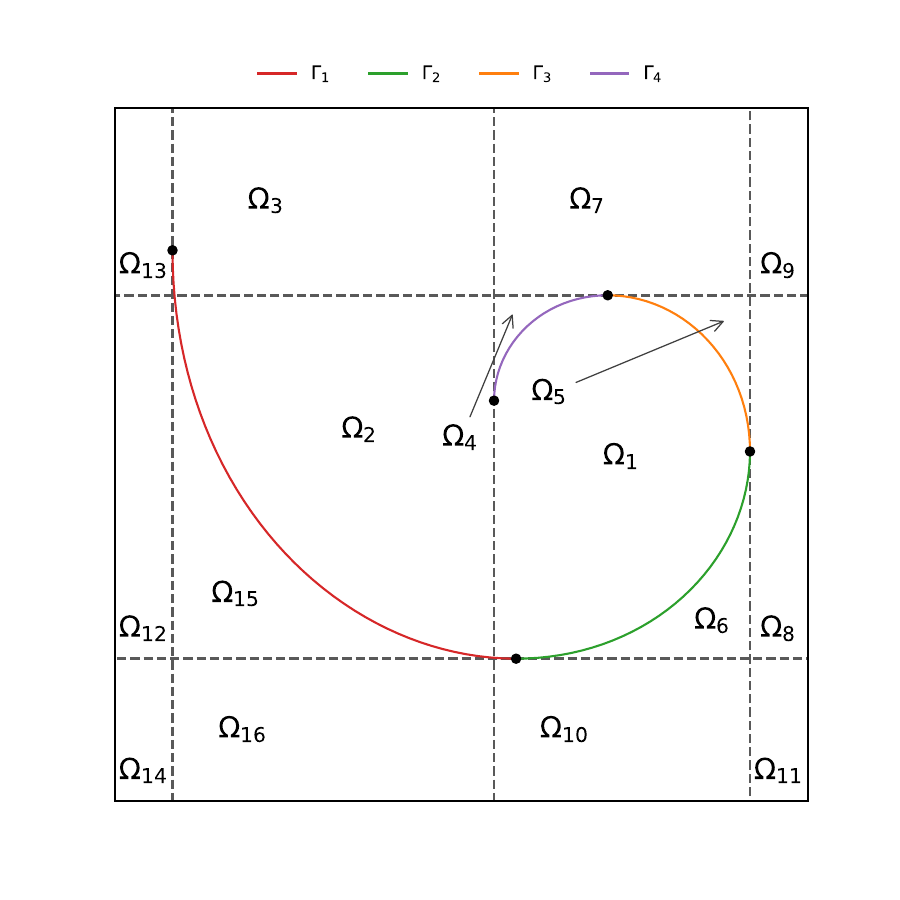} \\
      \includegraphics[width=0.31\textwidth]{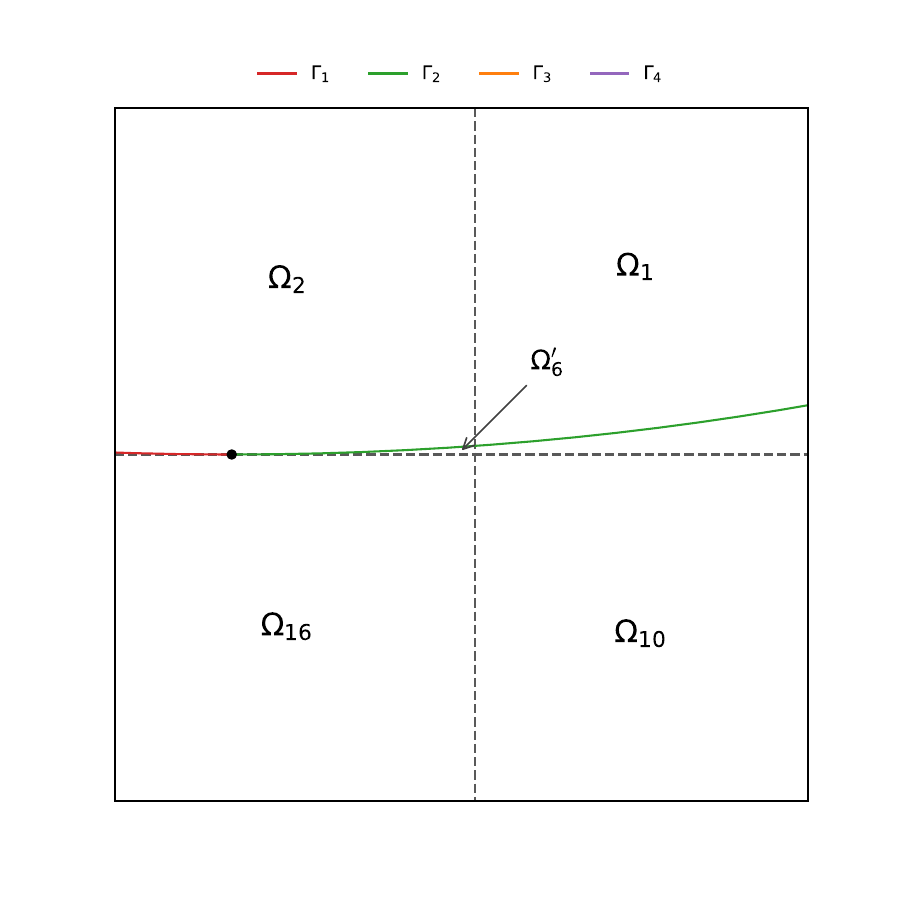}
      & \includegraphics[width=0.31\textwidth]{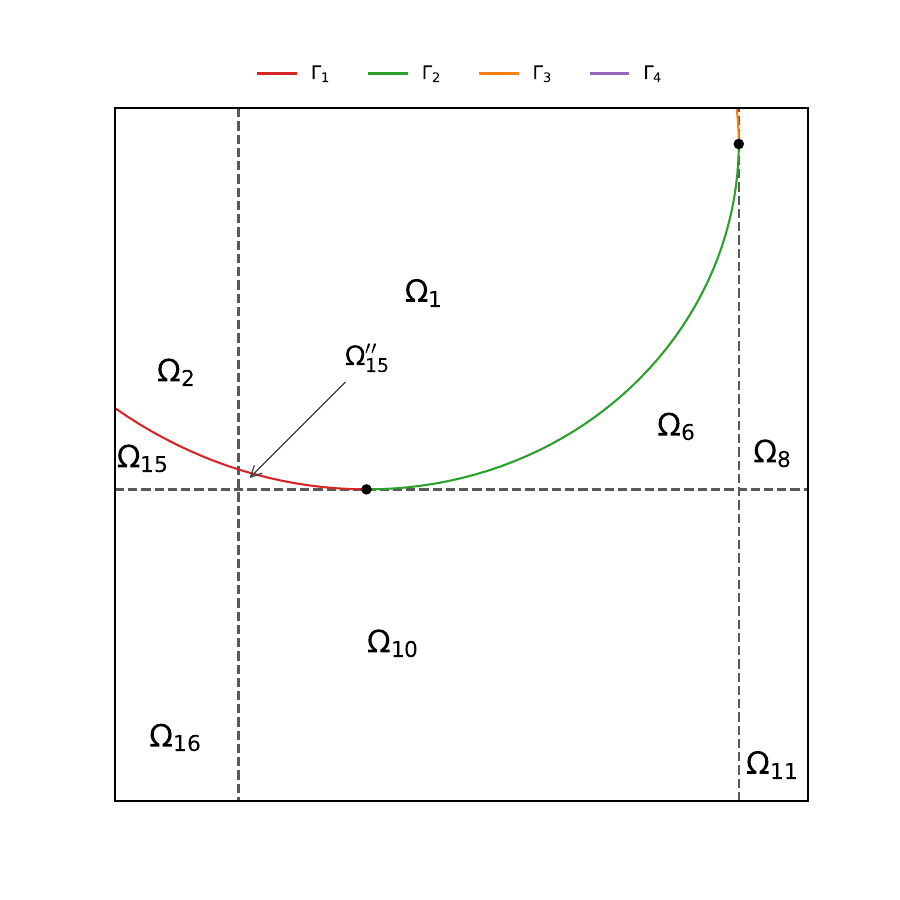}
      & \includegraphics[width=0.31\textwidth]{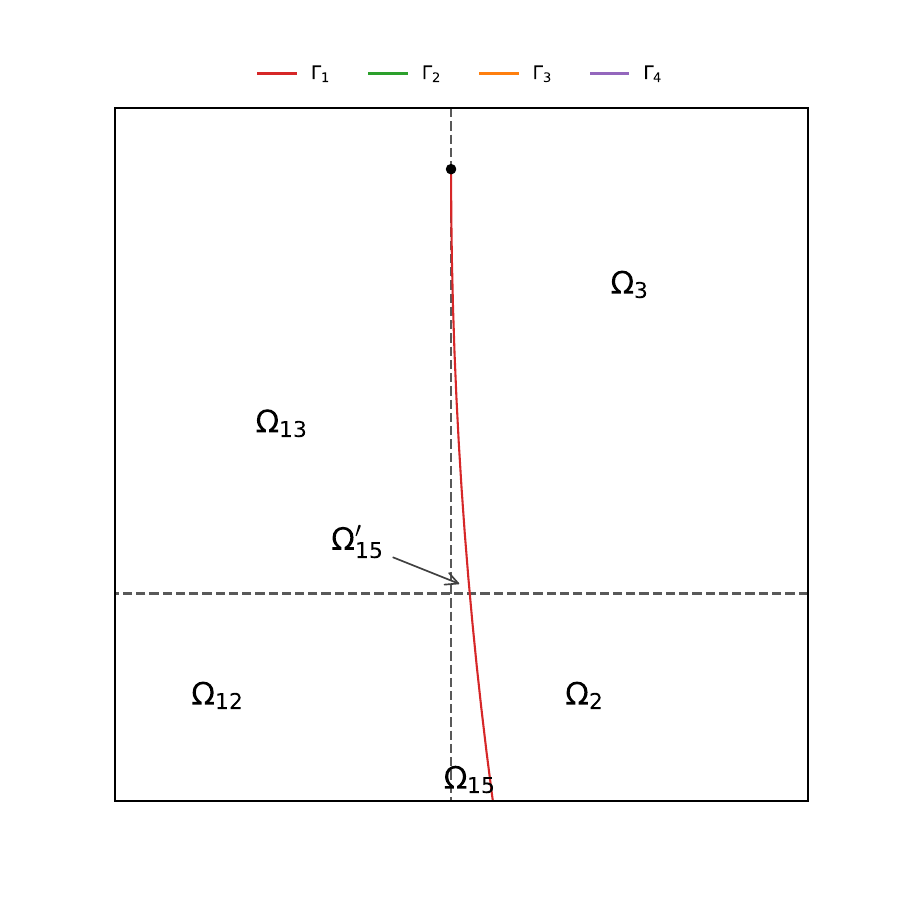}
    \end{tabular}
    \caption{Plane subdivisions for Cases A--C. Dashed tangents at
      $\boldsymbol{e}(s_\beta)$ and colored segments $\Gamma_1$--$\Gamma_4$
      delimit regions labeled by their tangency upper bounds. Top: full
      subdivisions; bottom: arrowed enlargements. The smallest regions,
    $\Omega_4$ and $\Omega_5$, occur in all three cases.}
    \label{fig:plane-division}
  \end{figure}

  For the degenerate case where the initial curvature $k_0$ vanishes, the tangency correspondence and four-segment division remain
  valid when the initial tangent is interpreted as a limit: as $s\to0^+$,
  $y_e(s)\to+\infty$, $x_e(s)\to0$, and the evolute tangent tends to $x=0$,
  the clothoid normal encoding $D'(0)=0$. In the frame
  $[\boldsymbol{O};[0,-1]^{\mathsf{T}},[1,0]^{\mathsf{T}}]$, $\Gamma_1$ is a convex graph with
  left endpoint $b=-\infty$, so the auxiliary tangency results and the bounds
  in \Cref{tab:tangents} still apply; moreover,
  $x_e(s_{\pi/2})<x_e(s_{2\pi})$, making the subdivision the limiting Case A
  of \Cref{fig:plane-division}. Hence
  \cref{eq:total-tangent-bound,lem:global-tangency-count} gives
  $N(\boldsymbol{q})\le3$ also for $k_0=0$, completing the proof of
  \Cref{thm:main-prime}.

  \subsection{Effectiveness and Optimization of the Distance Algorithm}
  The distance algorithm uses the endpoints as its initial candidates, replacing the
  left candidate by an iteration from $s=0$ when $d'(0)<0$ and the right
  candidate by an iteration from $s=L$ when $d'(L)>0$. Since $D'=d\,d'$ for
  $d>0$, these tests may equivalently be written using $D'$; if $d=0$, the
  global minimum is already known. 
  The midpoint branch has two distinct roles: it provides an additional candidate when neither endpoint test activates a search, and it serves as a numerical fallback when at least one endpoint search is activated but none of the activated iterations succeeds. Our analysis concerns only the first role.

  By \Cref{thm:main}, there are at most
  three stationary points, while stationary inflection points do not change
  the sign of $D'$ and local extrema alternate. Thus every interior local
  minimum activates an endpoint search unless it is the middle extremum of
  a max--min--max configuration. In that remaining case, $D'(0)\ge0$ and
  $D'(L)\le0$, so either endpoint test activates a search and the midpoint provides the additional candidate. Hence the original
  candidate-selection logic covers every possible minimum configuration.

  We next prove that the remaining configuration cannot occur. Under the
  standard normalization,
  \begin{equation}
    D'(s_0)=-q_x,
    \qquad
    D'(s_{2\pi})=x(s_{2\pi})-q_x.
    \label{eq:endpoint-distance-signs}
  \end{equation}
  If the extended $2\pi$ segment has three stationary points, then
  $N(\boldsymbol{q})\le\widehat N(\boldsymbol{q})\le3$ from
  \cref{eq:total-tangent-bound,lem:global-tangency-count} forces
  $N(\boldsymbol{q})=\widehat N(\boldsymbol{q})=3$; hence
  \cref{eq:three-tangency-strip,eq:endpoint-distance-signs} give
  $D'(s_0)\le0$ and $D'(s_{2\pi})\ge0$.
  \begin{theorem}[Min--Max--Min Pattern]
    \label{min-max-min}
    Let $\boldsymbol{p}(s)$ denote a proper no-inflection clothoid segment with
    tangent-angle variation at most $2\pi$. The conditions
    $D'(0)\ge0$ and $D'(L)\le0$ preclude an interior local minimum.
    Consequently, if three stationary points are all local extrema, their
    order is min--max--min.
  \end{theorem}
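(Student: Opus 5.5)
I will prove the theorem by contradiction. Suppose the segment $[0,L]$ satisfies $D'(0)\ge0$ and $D'(L)\le0$, and that $D$ has an interior local minimum at some $s_m\in(0,L)$. Using the same reversal and reflection as before \Cref{thm:main}, I normalize to $k>0$ and $k_0\ge0$. These operations preserve distances, and they preserve the pair of endpoint conditions because reversing the parametrization swaps the roles of the two endpoints and flips the sign of $D'$. I then extend the segment terminally to the standard $2\pi$ segment $[0,s_{2\pi}]\supseteq[0,L]$, with $s_0=0$.

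The first step is to build three stationary points. At the local minimum we have $D'(s_m)=0$, and $D'$ changes sign there from negative to positive, since local extrema are simple by Lemmas 3.1--3.3 of \cite{frego2019point}. Because $D'<0$ just to the left of $s_m$ while $D'(0)\ge0$, either $D'(0)=0$, giving a stationary point at $0$, or $D'$ has a zero in $(0,s_m)$ at which it goes from positive to negative. In both cases I obtain a stationary point $s_1\in[0,s_m)$. Symmetrically, $D'>0$ just right of $s_m$ and $D'(L)\le0$ give a stationary point $s_3\in(s_m,L]$. So the extended segment has at least three stationary points $s_1<s_m<s_3$. By \Cref{thm:main} it has exactly three, and the argument preceding the theorem gives $N(\boldsymbol{q})=\widehat N(\boldsymbol{q})=3$. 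Then \cref{eq:three-tangency-strip,eq:endpoint-distance-signs} yield $D'(0)\le0$ and $D'(s_{2\pi})\ge0$.

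The next step compares signs to reach a contradiction. Combining $D'(0)\ge0$ with $D'(0)\le0$ gives $D'(0)=0$, so $s_1=0$. There can then be no further zero of $D'$ in $(0,s_m)$. Since $D'<0$ just left of $s_m$, we get $D'<0$ on all of $(0,s_m)$. On $(s_m,s_{2\pi}]$ the only zero of $D'$ is $s_3$, and $D'>0$ just right of $s_m$. If $s_3$ were a sign change, then $D'<0$ on $(s_3,s_{2\pi}]$, contradicting $D'(s_{2\pi})\ge0$ unless $s_3=s_{2\pi}$. I therefore split into two cases.
\begin{enumerate}
\item If $s_3=s_{2\pi}$, then $L=s_{2\pi}$ and the original segment already has angle variation exactly $2\pi$. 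In this case I must show that the configuration ``stationary at $0$, minimum, stationary at $2\pi$'' with $D'<0$ on $(0,s_m)$ is impossible. I would handle it with the tangency geometry: $q_x=0$ and $q_x=x(s_{2\pi})$ would both hold. This puts $\boldsymbol{q}$ on the boundary lines of the strip, where the enumeration in \Cref{lem:global-tangency-count} should give $\widehat N\le2$ unless $x(s_{2\pi})=0$. That is an exceptional coincidence I would rule out by checking the corresponding boundary cells of \Cref{tab:tangents_all_case}.
\item If $s_3<s_{2\pi}$ and $s_3$ is a tangential zero (a stationary inflection of $D$), then $D'\ge0$ near $s_3$. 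This contradicts $D'(L)\le0$ with $L\ge s_3$ unless $L=s_3$. In that subcase I would argue that $D'(0)=D'(s_3)=0$, so $q_x=0$. The extension beyond $L$ then has $D'>0$ on $(s_3,s_{2\pi}]$, which is consistent. This means I need a sharper fact: at $q_x=0$ (the boundary of the strip) the count $\widehat N$ is at most $2$, so three stationary points cannot coexist there.
\end{enumerate}

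The main obstacle is precisely these boundary situations. The strip statement \cref{eq:three-tangency-strip} is closed, so it alone does not exclude $q_x=0$ or $q_x=x(s_{2\pi})$. I would first try to extract from the appendix enumeration that on the lines $q_x=x(0)$ and $q_x=x(s_{2\pi})$ the junction correction $\delta$, or the special cases of \Cref{prop:all-points} with $q_y=0$ in the relevant endpoint frame, reduce $\widehat N$ to $2$. The endpoint evolute tangent at $\boldsymbol{e}(s_0)$ is exactly the line $x=0$, so a point on it lies on $\Gamma_1$'s frame axis $q_y=0$ and falls in the ``at most one'' case. Finally, the ``consequently'' part follows directly: three local extrema alternate, and the pattern max--min--max forces $D'(0)\ge0$ and $D'(L)\le0$ with an interior minimum, which has just been excluded.
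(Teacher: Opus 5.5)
\paragraph{Verdict.} There is a genuine gap: the two boundary cases that remain at the end of your argument are left open, and the tool you propose for them does not work.

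\paragraph{What is sound.} Your reduction is correct up to the point where you obtain $D'(0)=0$ (so $q_x=0$), $D'<0$ on $(0,s_m)$, and two residual cases:
\begin{enumerate}
\item $s_3=s_{2\pi}$;
\item $s_3=L<s_{2\pi}$ is a zero of $D'$ without sign change.
\end{enumerate}
The sign-change case and the ``consequently'' part are handled correctly, in the same way as the paper.

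\paragraph{Case~2: the proposed bound is false.} You want $\widehat N(\boldsymbol{q})\le2$ on the whole line $q_x=0$, but this fails. That line is the evolute tangent at $\boldsymbol{e}(s_0)$. Its piece between $\boldsymbol{e}(s_0)$ and the tangent at $\boldsymbol{e}(s_{\pi/2})$ is where the cells $A_2$ and $A_4$ of $\Gamma_1$ meet, so $u_1=2$ there. \Cref{tab:tangents_all_case} records $\widehat N=3$ on exactly this edge, $\Omega_{12}\cap\Omega_{15}$.

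The value is actually attained. Take $\boldsymbol{q}=(0,q_y)$ with $y_e(s_{\pi/2})<q_y<1/k_0$:
\begin{itemize}
\item $s=0$ is a local minimum, because $D''(0)=1-k_0q_y>0$;
\item $\Gamma_1$ supplies a second tangency, which is a maximum;
\item $D'(s_{2\pi})=x(s_{2\pi})>0$ forces a third stationary point.
\end{itemize}
So three stationary points with $q_x=0$ genuinely occur, and counting on this line cannot give the contradiction. What your configuration violates is the \emph{type} of the zero at $s_3$, not the number of zeros.

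\paragraph{Case~1: the needed fact is metric, not combinatorial.} Here $q_x=0$ and $q_x=x(s_{2\pi})$ together force $x(s_{2\pi})=0$. Everything therefore hinges on a metric property of the clothoid, and inspecting tangency cells cannot supply it.

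\paragraph{The two missing facts (as used in the paper).}
\begin{itemize}
\item \emph{$x(s_{2\pi})>x(s_0)=0$.} Integrating by parts in $\theta$, using $\mathrm{d}\kappa/\mathrm{d}\theta=k/\kappa$, gives $x(s_{2\pi})=\int_0^{2\pi}k\kappa^{-3}\sin\theta\,\mathrm{d}\theta$. Pairing $\theta$ with $\theta+\pi$ makes this positive, since $\kappa$ increases. Hence the parallel endpoint normals $x=0$ and $x=x(s_{2\pi})$ are distinct lines, and Case~1 cannot occur.
\item \emph{A zero without sign change puts $\boldsymbol{q}$ on the evolute.} Such a zero $r$ has $D''(r)=1+\kappa(r)(\boldsymbol{p}(r)-\boldsymbol{q})^{\mathsf{T}}\mathbf{N}(r)=0$. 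Together with $D'(r)=0$ this gives $\boldsymbol{q}=\boldsymbol{e}(r)$. Since $x_e'=k\sin\theta/\kappa^2$, the function $x_e$ increases on $[0,s_\pi]$ and then decreases to $x_e(s_{2\pi})=x(s_{2\pi})>0$. Hence $q_x=x_e(r)>0$, contradicting $q_x=0$.
\end{itemize}
With these two facts in place of the proposed boundary refinement of the enumeration, your case analysis closes.
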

  \begin{proof}
    Suppose that an interior local minimum $m$ exists under the stated
    endpoint signs. It is a simple zero of $D'$, and continuity gives nearest
    stationary points $\ell<m<r$ on its two sides. Normalize the segment and
    extend it only terminally to tangent-angle variation $2\pi$, so $s_0=0$.
    The three stationary points persist, and \Cref{thm:main} allows no others;
    hence the endpoint inequalities above give $D'(s_0)\le0$. Together with
    $D'(0)\ge0$, this yields $D'(s_0)=0$, so $\ell=s_0$ and $q_x=0$.

    Direct calculation from
    \cref{eq:clothoid_arc_parametrization,eq:evolute_arc_parametrization_with_k0},
    using $\theta$ as the integration variable and pairing opposite
    half-periods, gives
    \begin{equation}
      x(s_{2\pi})>x(s_0),
      \qquad
      x_e(s_\alpha)>x_e(s_0)=0
      \quad(0<\alpha\le2\pi),
      \label{eq:positive-x-displacements}
    \end{equation}
    with limiting values understood when $k_0=0$.

    If $r<s_{2\pi}$ is simple, it is a local maximum and the absence of any
    later stationary point gives $D'(s_{2\pi})<0$, contradicting the endpoint
    inequality. If $r=s_{2\pi}$, the query point lies on the two parallel
    endpoint normal lines, which are distinct by the first inequality in
    \cref{eq:positive-x-displacements}. The remaining possibility is a
    multiple zero $r<s_{2\pi}$. Then $D'(r)=D''(r)=0$, where
    $
    D''(r)=1+\kappa(r)
    (\boldsymbol{p}(r)-\boldsymbol{q})^{\mathsf{T}}\mathbf{N}(r),
    $
    so $\boldsymbol{q}=\boldsymbol{e}(r)$. The second inequality in
    \cref{eq:positive-x-displacements} gives $q_x>0$, contradicting $q_x=0$.
    Hence the endpoint conditions exclude an
    interior local minimum. A max--min--max configuration satisfies these
    conditions, so three local extrema must instead have min--max--min
    order.
  \end{proof}

  \Cref{min-max-min} therefore eliminates the midpoint initialization as an additional geometric candidate when both endpoint derivative tests are inactive. Indeed, under the condition that 
  $D'(0) \ge 0$ and $D'(L) \le 0$, 
  no interior local minimum exists, and hence the global minimum is atteined at one of the two endpoints. This conclusion does not concern numerical failure of an endpoint iteration after that iteration has been activated. Accordingly, the midpoint initialization may still be retained as a numerical fallback when the activated endpoint searches fail to produce a valid candidate. \Cref{alg:candidate-selection} makes this separation explicit.

  \section{Numerical Experiments}
  \label{sec:experiments}

  We compare the reduced-candidate(\emph{RC}) method with the original candidate-selection(\emph{ORI}) procedure 
  of \cite{frego2019point} on
  no-inflection segments with $k=1/2$, $k_0=\sqrt{\pi/2}$, and angle
  variations $n\pi/3$, $n=1,2,4,5$. 
  In the present experiments, no activated endpoint iteration failed; hence the numerical fallback was never invoked, and the RC implementation used here coincides with the safeguarded reduced-candidate procedure on all tested cases.
  Query points form a
  $1000\times1000$ grid over the segment bounding box enlarged by one unit on
  each side, and the tolerance is $\epsilon=2^{-23}$. Experiments use
  \textsf{Python~3.11.14}, \textsf{NumPy~2.3.5} with Intel MKL~2023.1, and \textsf{SciPy~1.16.3} in
  double precision on a single-threaded Intel Core i7-13700HX computer with
  32~Gigabytes of memory and Windows~11 Pro for Workstations. For the iteration-count
  experiment, each vectorized call processes the full query grid.
  \Cref{fig:iterations-maps} shows the pointwise iteration counts and savings
  on a shared scale.
  \begin{figure}[htbp]
    \centering
    \setlength{\tabcolsep}{4pt}
    \begin{tabular}{c@{\hspace{8pt}}cccc}
      & \footnotesize{$\Delta\theta = \pi/3$}
      & \footnotesize{$\Delta\theta = 2\pi/3$}
      & \footnotesize{$\Delta\theta = 4\pi/3$}
      & \footnotesize{$\Delta\theta = 5\pi/3$} \\[5pt]
      \rotatebox{90}{\footnotesize  the RC method}
      & \includegraphics[width=0.2\textwidth]{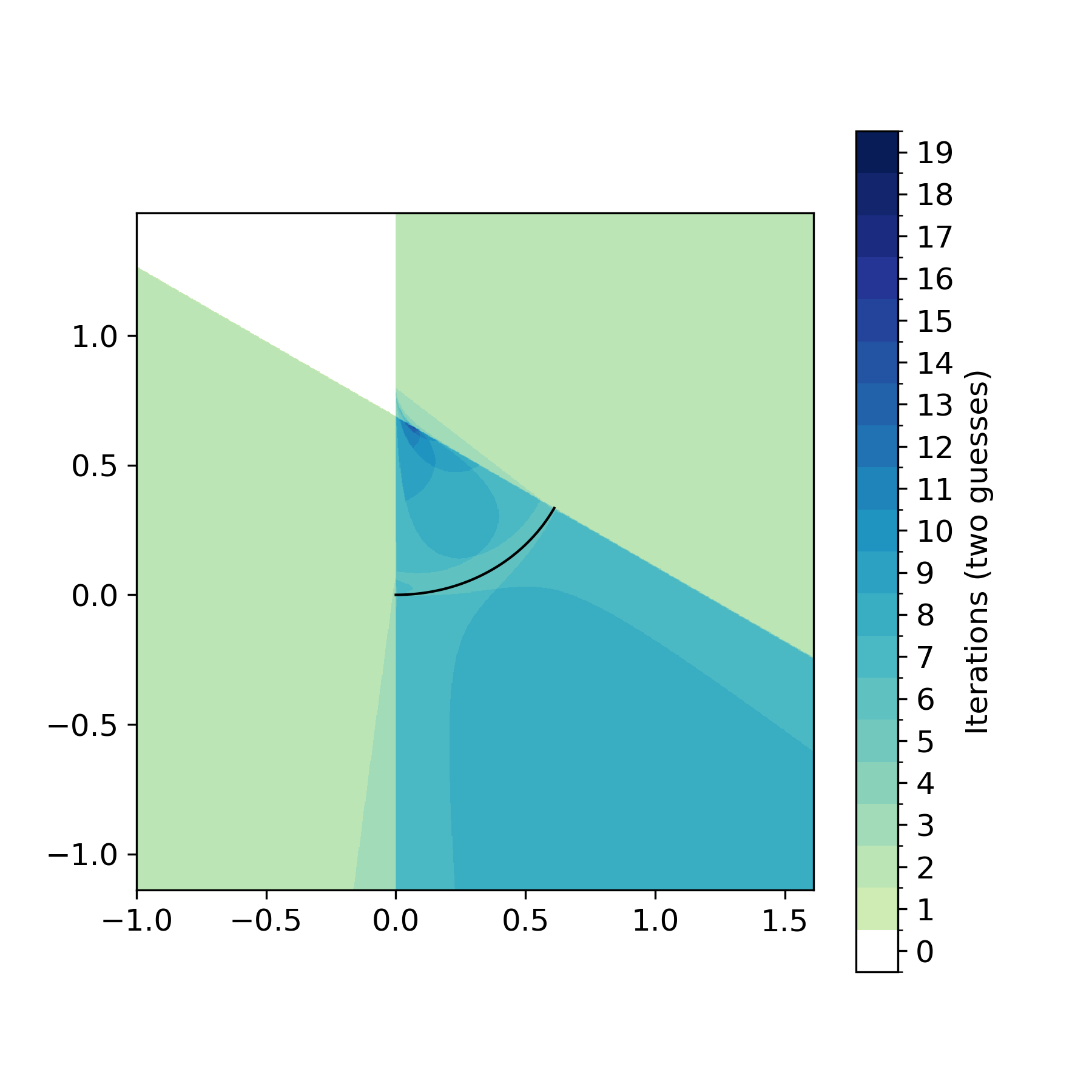}
      & \includegraphics[width=0.2\textwidth]{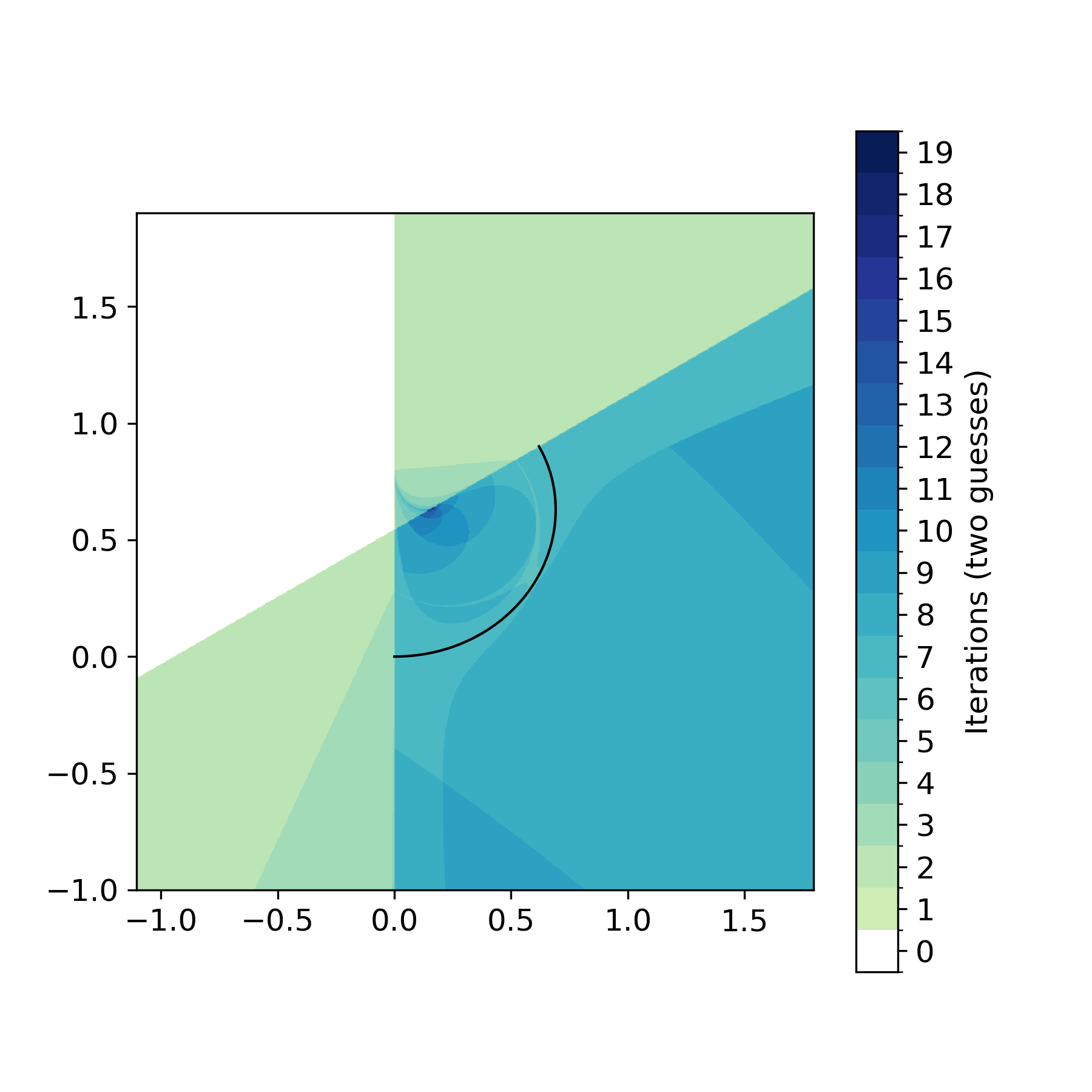}
      & \includegraphics[width=0.2\textwidth]{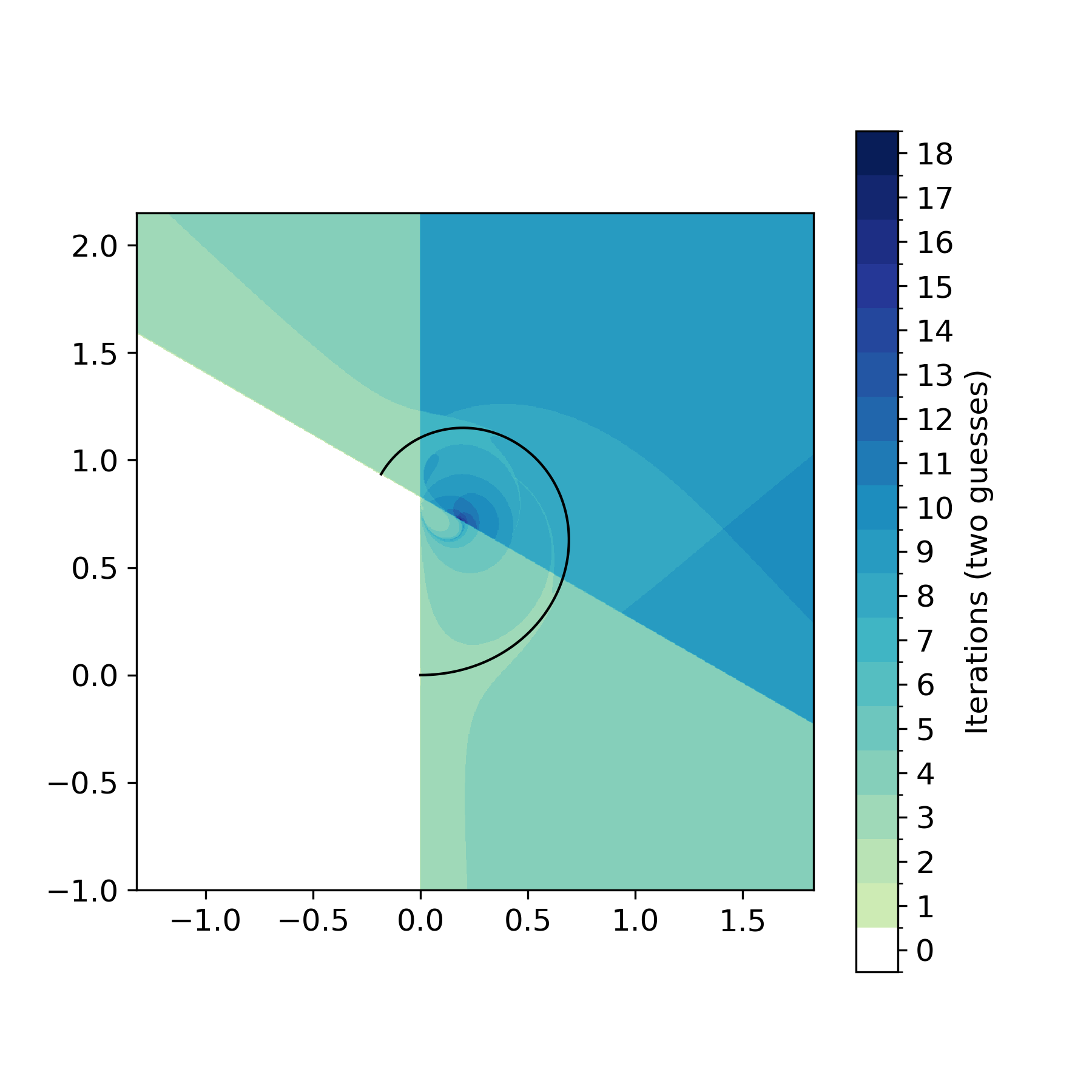}
      & \includegraphics[width=0.2\textwidth]{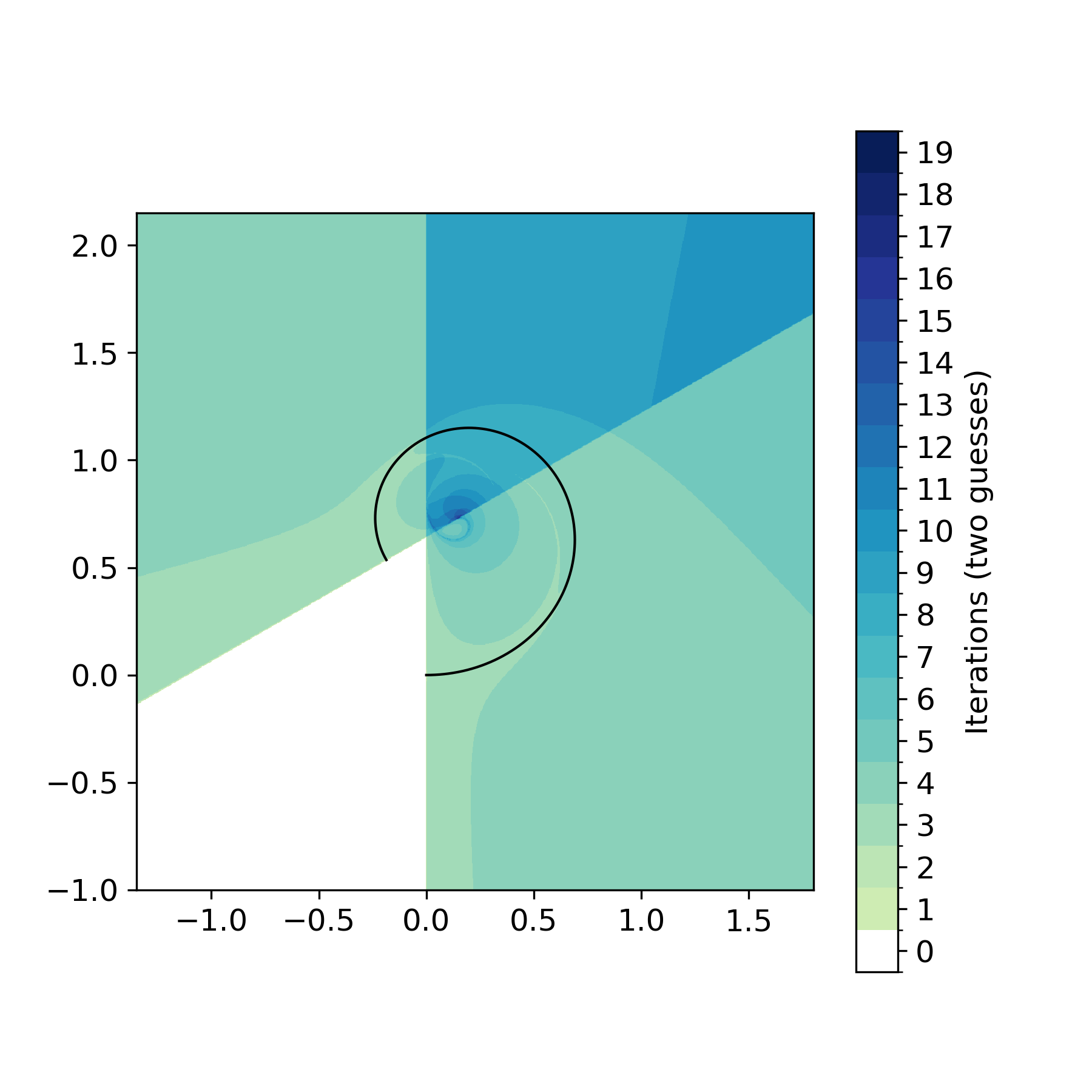} \\[5pt]
      \rotatebox{90}{\footnotesize the ORI method}
      & \includegraphics[width=0.2\textwidth]{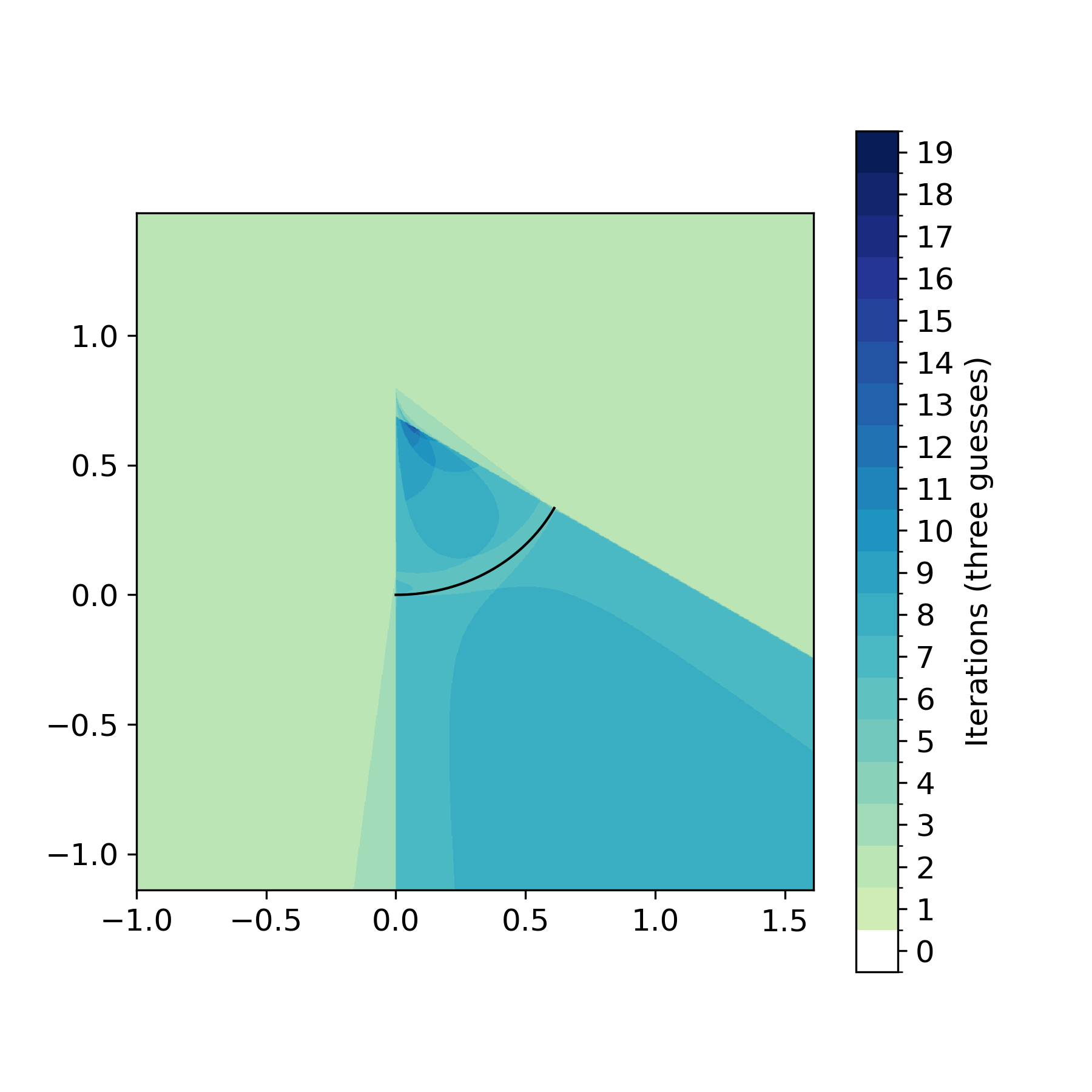}
      & \includegraphics[width=0.2\textwidth]{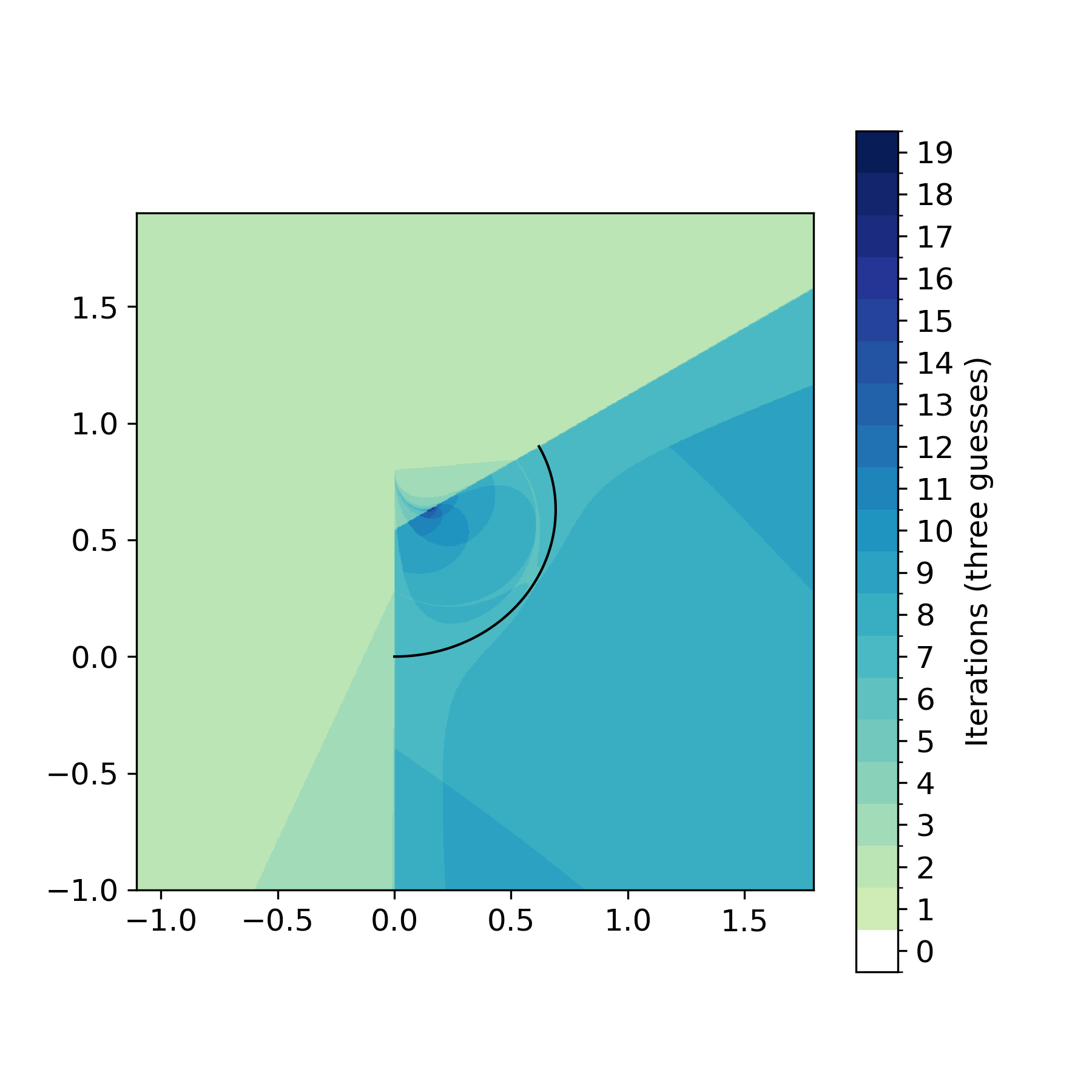}
      & \includegraphics[width=0.2\textwidth]{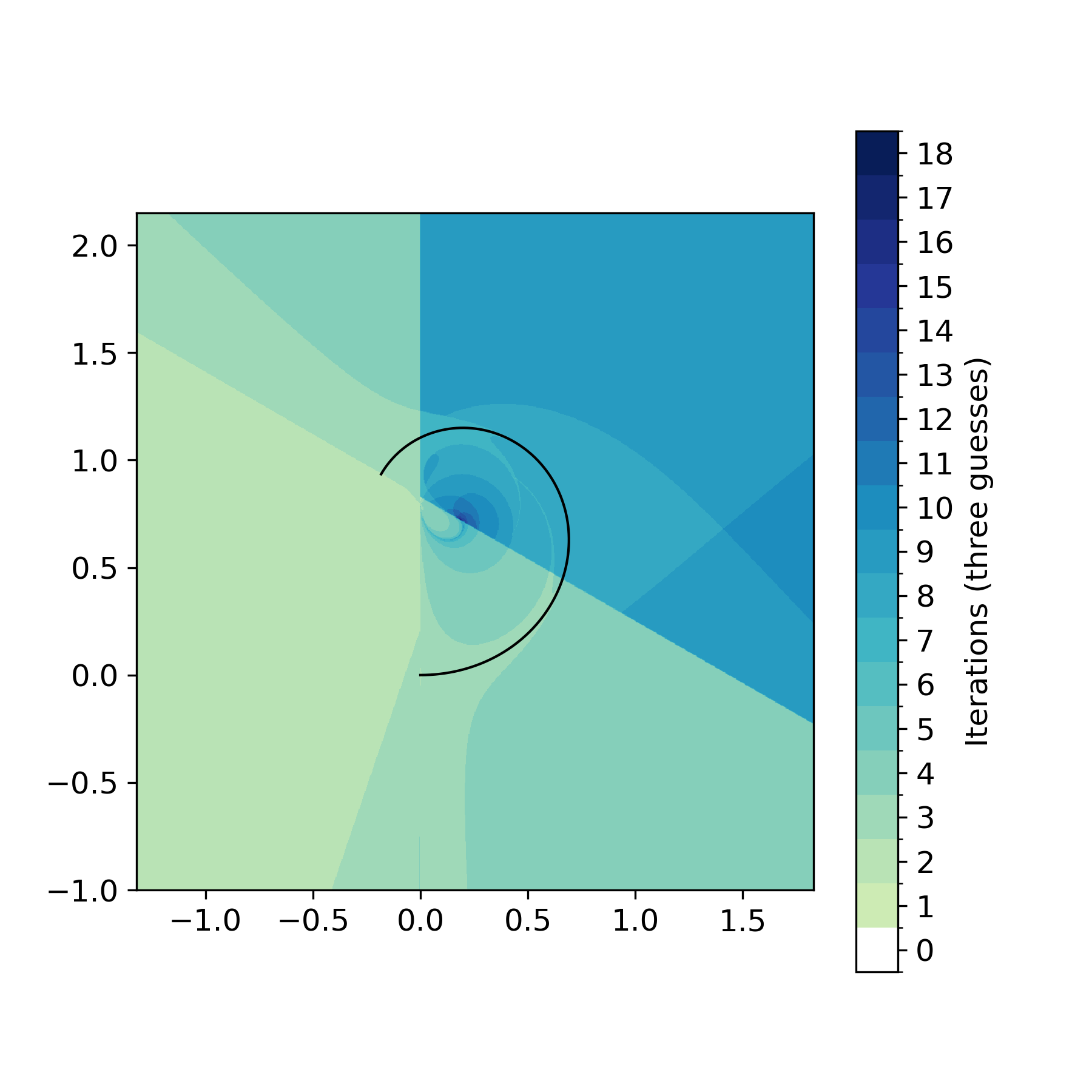}
      & \includegraphics[width=0.2\textwidth]{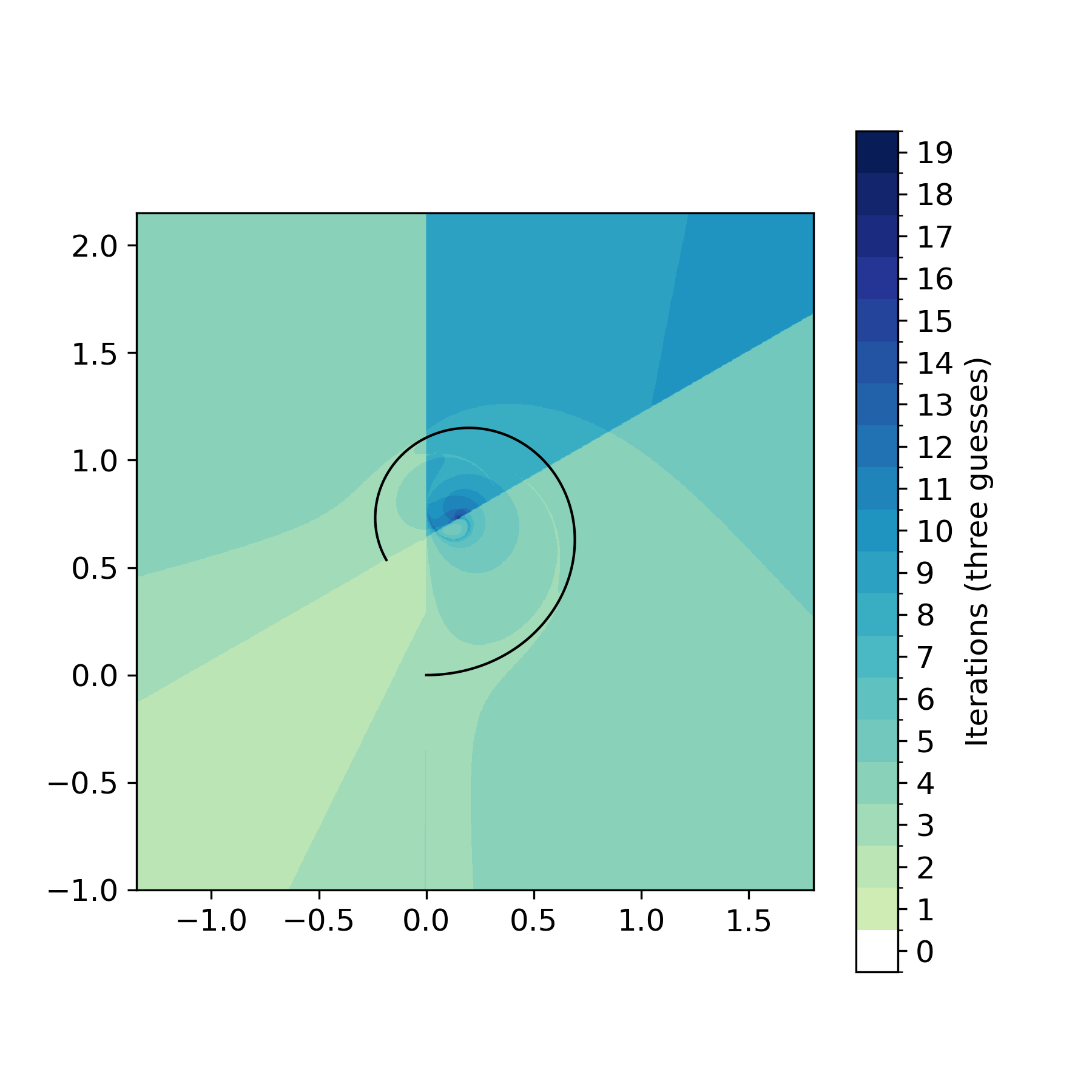} \\[5pt]
      \rotatebox{90}{\footnotesize iterations saved}
      & \includegraphics[width=0.2\textwidth]{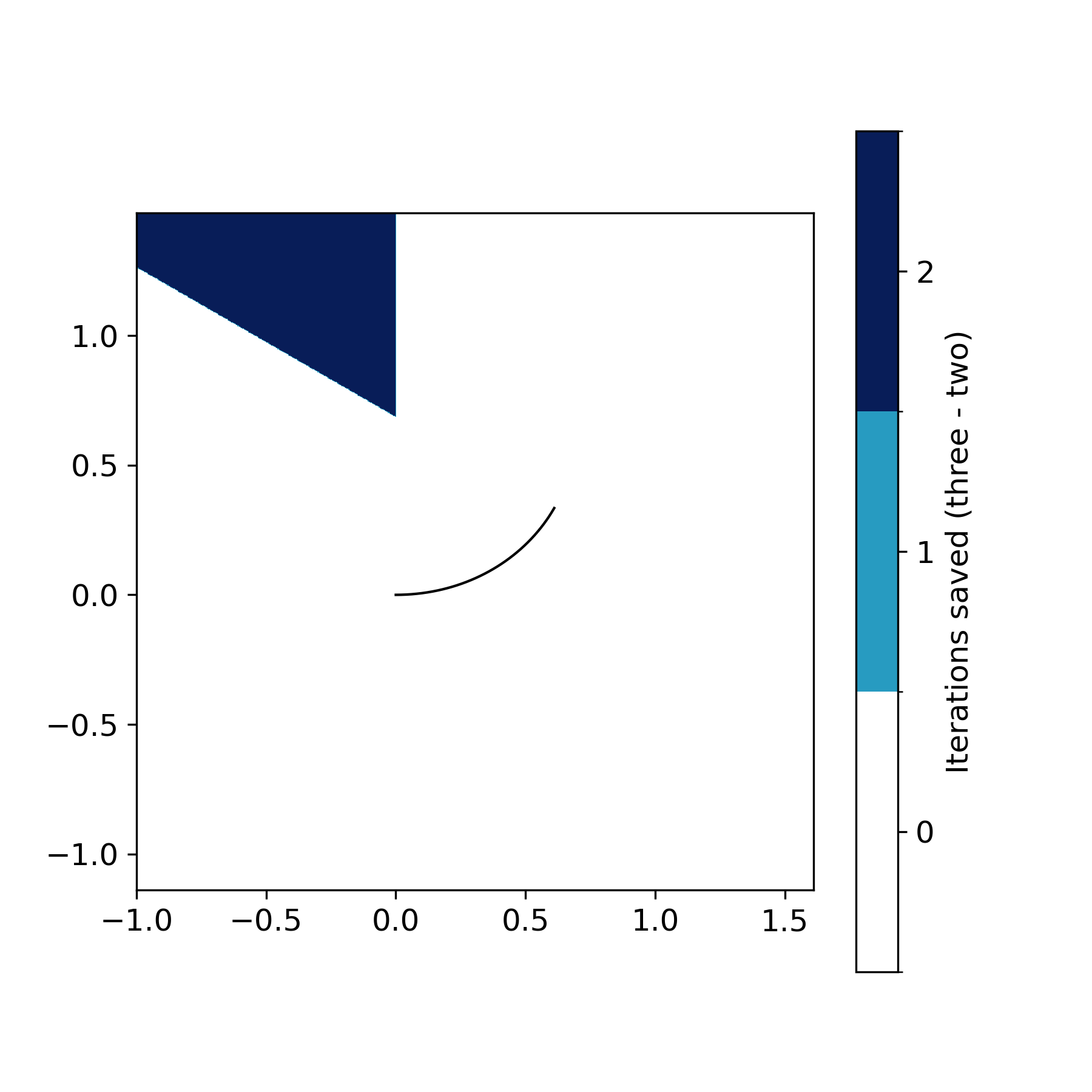}
      & \includegraphics[width=0.2\textwidth]{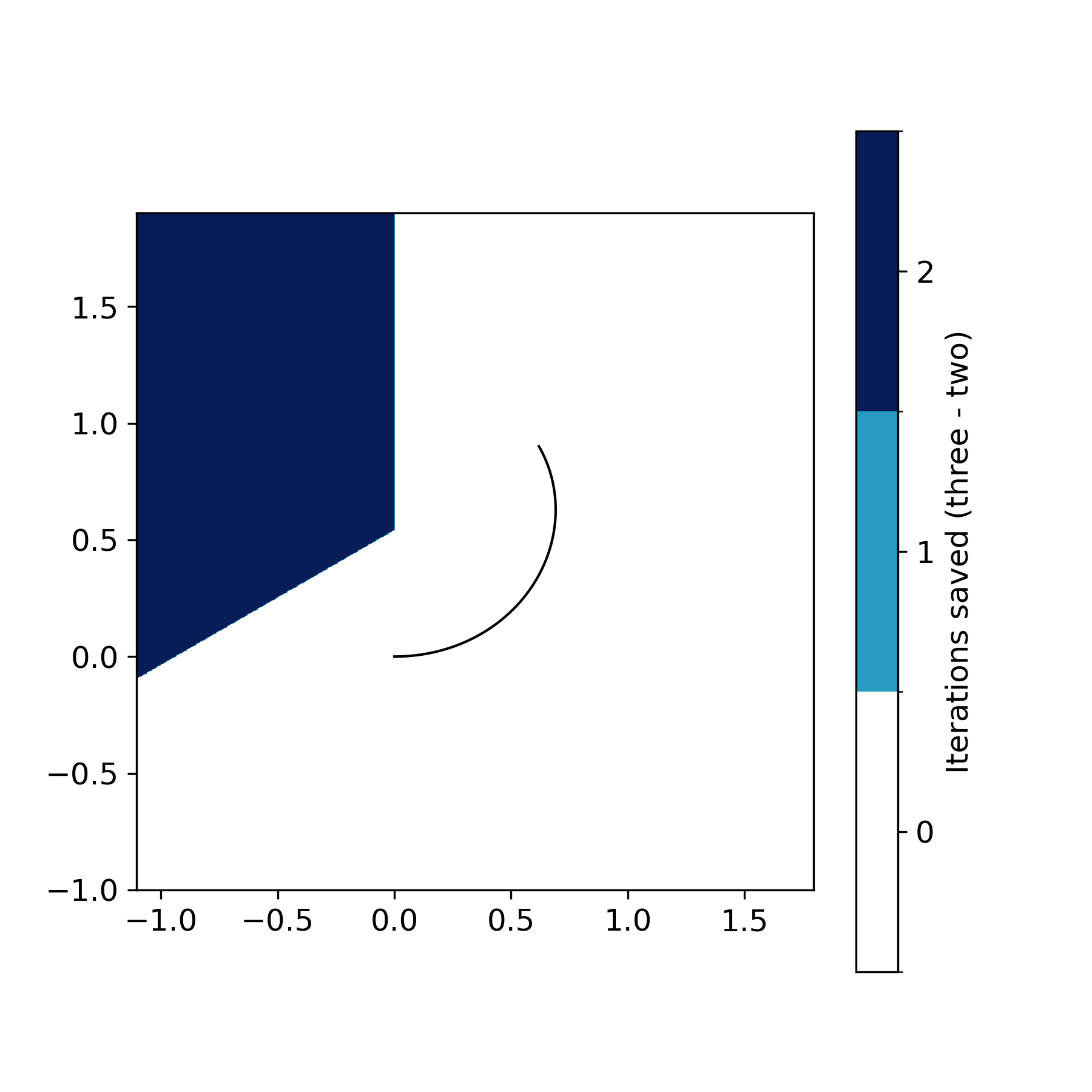}
      & \includegraphics[width=0.2\textwidth]{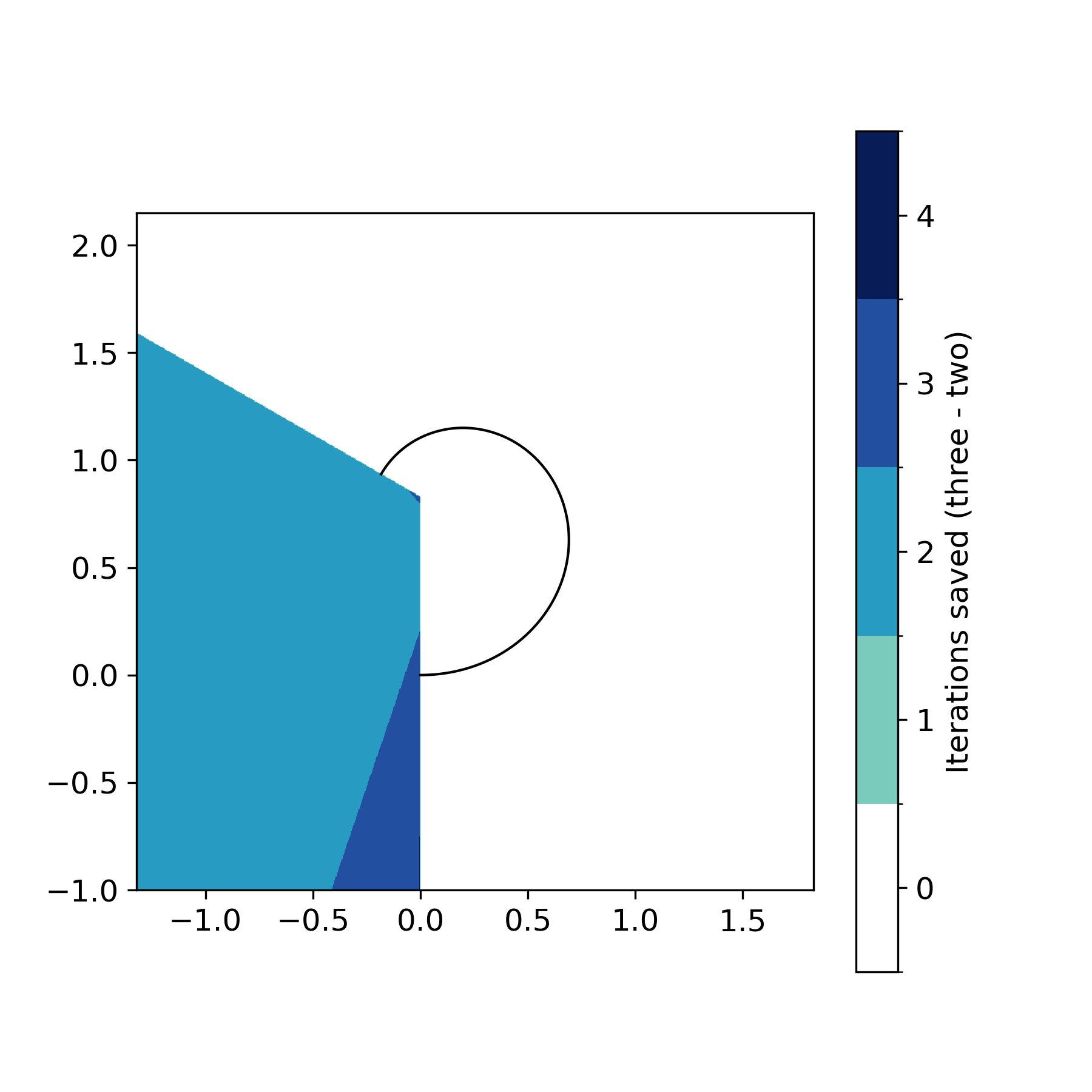}
      & \includegraphics[width=0.2\textwidth]{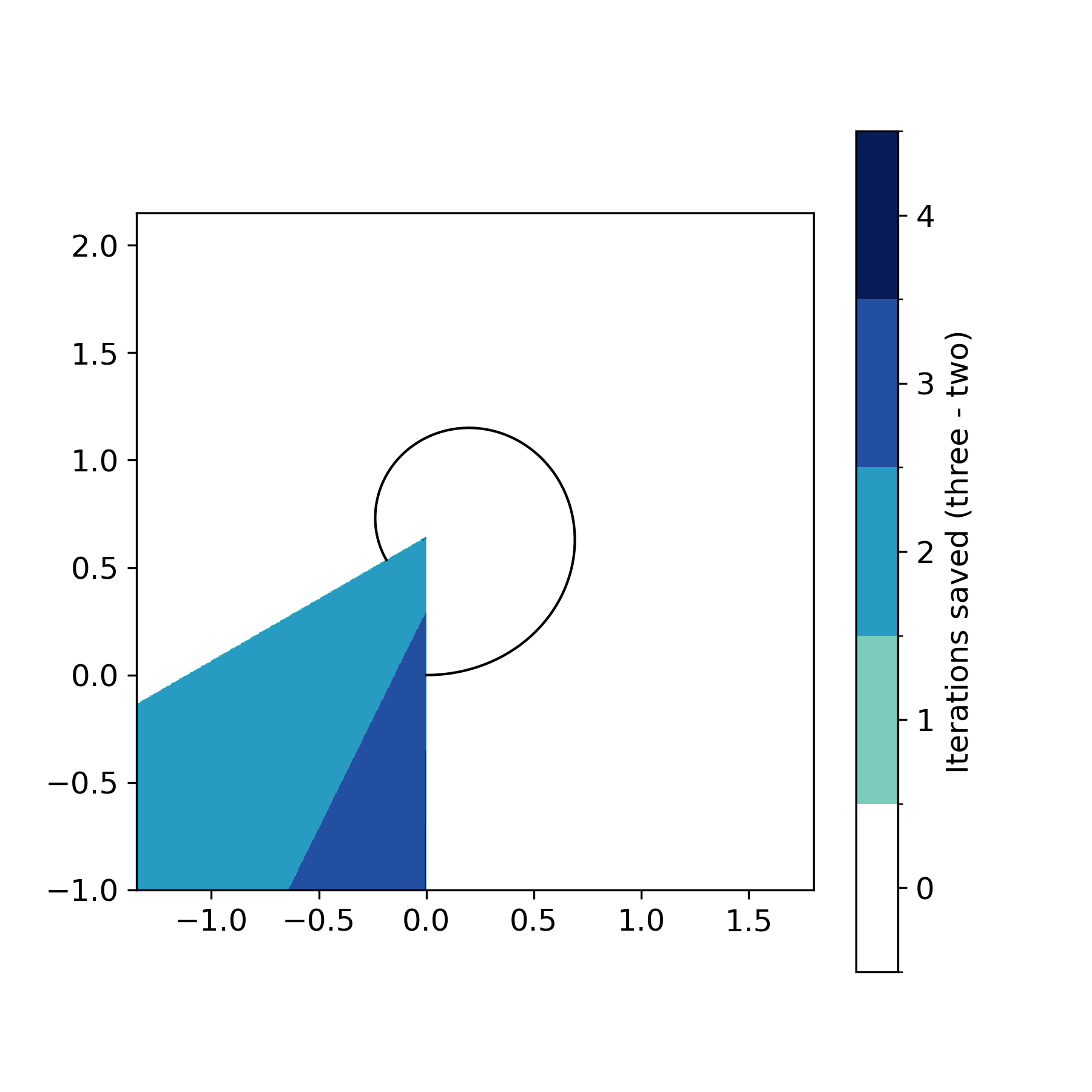}
    \end{tabular}
    \caption{Iteration maps for $k=1/2$, $k_0=\sqrt{\pi/2}$, and
      $\Delta\theta=n\pi/3$, $n=1,2,4,5$ (columns). The top and middle rows
      show the RC and ORI counts on a shared scale; the
      bottom row shows the iterations saved, with darker cells indicating
      larger savings. Each cell displays the average of its four corner
      samples, so the bottom row is the cellwise difference of the two rows
    above.}
    \label{fig:iterations-maps}
  \end{figure}

  The algorithms agree outside the marked region where neither endpoint search is activated, i.e., where $d'(0)\ge0$ and $d'(L)\le0$. Since
  $d'(s)=\cos(\theta(s)-\phi(s))$, with $\phi(s)$ the polar angle of
  $\boldsymbol{p}(s)-\boldsymbol{q}$, this region is geometrically the
  intersection of the half-planes behind the start point and beyond the end
  point, bounded by the endpoint normals: it
  is a narrow wedge for $\Delta\theta=\pi/3$, covers nearly the left half-plane
  for $2\pi/3$, and extends farther downward for $4\pi/3$ and $5\pi/3$, with
  the largest savings along the initial normal. This region contains
  7.29\%--29.40\% of the grid; skipping the midpoint guess saves at most four
  iterations per marked point and reduces the full-grid iteration count by
  3.80\%--12.29\%. For timing, we use only these marked points and report the mean compute time
  of $10^4$ alternating vectorized calls per algorithm and segment. The algorithm for
  RC method saves
  60.9\%--70.3\% of the evaluation time, while \Cref{min-max-min} guarantees
  that this omission removes no geometric minimum-distance candidate.
  \begin{table}[htbp]
    \scriptsize
    \caption{Iteration and timing comparison for $k=1/2$,
      $k_0=\sqrt{\pi/2}$, and $\Delta\theta=n\pi/3$, $n=1,2,4,5$.
      Iteration statistics use the
      $1000\times1000$ grid; timings use the marked points and report the
    mean of $10^4$ alternating vectorized evaluations.}
    \label{tab:performance}
    \begin{center}
      \setlength{\tabcolsep}{2.5pt}
      \begin{tabular}{|c|c|c|c|c|c|c|c|c|} \hline
        & marked & \multicolumn{4}{c|}{stationary-point iterations}
        & \multicolumn{3}{c|}{compute time on \textbf{marked points}} \\
        \cline{3-9}
        $\Delta\theta$ & \shortstack{points\\(\%)}
        & \shortstack{RC\\mean} & \shortstack{ORI\\mean}
        & \shortstack{max iter.\\saved} & \shortstack{total iter.\\reduction (\%)}
        & \shortstack{RC\\(ms)} & \shortstack{ORI\\(ms)}
        & \shortstack{time saved\\(\%)} \\ \hline
        $\pi/3$  & 7.29  & 3.691 & 3.837 & 2 & 3.80  & 7.00  & 23.56  & 70.3 \\
        $2\pi/3$ & 22.02 & 4.235 & 4.676 & 2 & 9.42  & 26.19 & 88.11  & 70.3 \\
        $4\pi/3$ & 29.40 & 4.379 & 4.992 & 4 & 12.29 & 63.85 & 174.24 & 63.4 \\
        $5\pi/3$ & 17.03 & 4.224 & 4.607 & 4 & 8.31  & 36.28 & 92.67  & 60.9 \\ \hline
      \end{tabular}
    \end{center}
  \end{table}

  \section{Conclusions}
  \label{sec:conclusions}
  In this manuscript, we revisited the point-clothoid distance algorithm of Frego and Bertolazzi. The one-local-minimum characterization in the underlying analysis is not exhaustive: a proper no-inflection clothoid segment with tangent-angle variation at most $2\pi$ may contain two local minima. To determine whether the candidate-selection strategy keeps complete, we recast stationary points of the squared-distance function as tangencies from the query point to the clothoid evolute. Decomposing the evolute into four \(\pi/2\) segments and exploiting their strict convexity yields a global tangency bound, including the zero-initial-curvature case.

  The resulting classification shows that there are at most three stationary points and that three local extrema can occur only in min--max--min order. Consequently, the endpoint-based candidate-selection logic fully accounts for all possible geometric minimum candidates. When neither endpoint test is active, no interior local minimum exists, and the midpoint need not be searched as an additional candidate. This does not eliminate its separate role as a numerical fallback. The resulting procedure therefore simplifies the candidate-selection logic while preserving the numerical safeguarding mechanism of the original method. The numerical experiments quantify the corresponding savings in iteration count and evaluation time.

  \appendix
  \section{Proofs of the auxiliary tangency bounds}
  \label{sec:supp-auxiliary-tangency-proofs}

  This section contains the proofs of \Cref{prop:y<0,prop:y>f(x),cor:convex-extension,prop:all-points} in the main manuscript, together with an auxiliary supporting-line lemma used in the proof of \Cref{prop:all-points}.

  \subsection*{Proof of \Cref{prop:y<0}}

  \begin{proof}
    Since $f'>0$ and $f(x)\to0$ as $x\to b^+$, we have $f>0$ on
    $(b,a)$. Suppose two interior tangents at $x_1<x_2$ pass through
    $\boldsymbol{q}$. Then
    \begin{equation}
      \label{eq:supp-system-of-equations-f}
      q_y=f(x_i)+f'(x_i)(q_x-x_i),\qquad i=1,2.
    \end{equation}
    Eliminating $q_x$ gives
    \begin{equation}
      \label{eq:supp-explicit-expression-of-y}
      q_y=\frac{f'(x_1)f'(x_2)}{f'(x_2)-f'(x_1)}
      [\phi(x_2)-\phi(x_1)],
    \end{equation}
    where
    \begin{equation}
      \label{eq:supp-expression-of-phi}
      \phi(x)=x-\frac{f(x)}{f'(x)},
      \qquad
      \phi'(x)=\frac{f(x)f''(x)}{[f'(x)]^2}>0.
    \end{equation}
    Since $f''>0$, $f'(x_2)>f'(x_1)$, so
    \cref{eq:supp-explicit-expression-of-y,eq:supp-expression-of-phi} imply
    $q_y>0$, a contradiction.

    The endpoint cases give the same conclusion. If $b$ is finite, its tangent
    is $y=0$. If the right endpoint is finite, its limiting tangent is $x=a$;
    its intersection with the tangent at $x_1<a$ has ordinate
    $f(x_1)+f'(x_1)(a-x_1)>0$. Thus a point with $q_y<0$ lies on at most one
    tangent.

    Finally, if $q_x>a$, every interior tangent at $x_0$ has ordinate
    $f(x_0)+f'(x_0)(q_x-x_0)>0$ at $q_x$. Neither endpoint tangent can contain
    a point with $q_x>a$ and $q_y<0$, so no tangent passes through
    $\boldsymbol{q}$.
  \end{proof}

  \subsection*{Proof of \Cref{prop:y>f(x)}}
  \begin{proof}
    A nonvertical tangent at $x_0\in[b,a)$ passing through
    $\boldsymbol{q}$ would satisfy
    \begin{equation}
      \label{eq:supp-tangent-equation}
      q_y=f(x_0)+f'(x_0)(q_x-x_0).
    \end{equation}
    Convexity gives
    $f(q_x)\ge f(x_0)+f'(x_0)(q_x-x_0)=q_y$, contradicting
    $q_y>f(q_x)$. The limiting tangent at the right endpoint is the vertical
    line $x=a$, which cannot contain a point with $q_x<a$.
  \end{proof}

  \subsection*{Proof of \Cref{cor:convex-extension}}

  \begin{proof}
    The conditions $f(b)=f'(b)=0$ make the constant extension convex. The
    supporting-line argument in the proof of \Cref{prop:y>f(x)} then applies
    directly to $F$.
  \end{proof}

  \subsection*{An Auxiliary Lemma}

  \begin{lemma}[Auxiliary Lemma for \Cref{prop:all-points}]
    \label{lem:aux-lemma}
    Let $C\subset\mathbb{R}^2$ be a nonempty closed convex set and
    $\boldsymbol{q}\notin C$. At most two supporting lines of $C$ pass through
    $\boldsymbol{q}$. Consequently, where tangent lines are defined, at most two
    can pass through $\boldsymbol{q}$.
  \end{lemma}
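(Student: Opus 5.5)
The plan is to prove the auxiliary lemma by passing from lines to directions. Fix $\boldsymbol{q}\notin C$ and consider the cone
\[
K=\{\,t(\boldsymbol{c}-\boldsymbol{q}) : t\ge0,\ \boldsymbol{c}\in C\,\}.
\]
$K$ is the set of directions from $\boldsymbol{q}$ toward $C$. Since $C$ is convex, $K$ is a convex cone with apex at the origin. A line $\ell$ through $\boldsymbol{q}$ supports $C$ exactly when $\ell$ meets $C$ and $C$ lies in one closed half-plane of $\ell$. Translating $\ell$ by $-\boldsymbol{q}$ gives a line $\ell_0$ through the origin, and $\ell$ supports $C$ precisely when $\ell_0$ meets $K\setminus\{\boldsymbol{0}\}$ and $K$ lies in one closed half-plane of $\ell_0$. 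So the task reduces to counting lines through the apex that support a planar convex cone while touching it away from the apex.

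The next step is to classify $K$. Because $\boldsymbol{q}\notin C$ and $C$ is closed and convex, strict separation gives a line through $\boldsymbol{q}$ with $C$ strictly on one side. Hence $K\setminus\{\boldsymbol{0}\}$ lies in an open half-plane $H$, and $K$ contains no line. A convex cone in the plane contained in $\{\boldsymbol{0}\}\cup H$ is determined by an interval of polar angles of length less than $\pi$: it is either a single ray or a sector bounded by two rays. Any line through the origin that meets $K\setminus\{\boldsymbol{0}\}$ and leaves $K$ on one side must contain one of the boundary rays. There are at most two boundary rays, so there are at most two such lines.

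The delicate point, and the step I expect to be the main obstacle, is that $K$ need not be closed. A line through the apex containing only a limiting ray that is not attained does not meet $C$, so it is not a supporting line in the sense used here. I will therefore count only attained boundary rays, which can only reduce the number. If the convention instead allows asymptotic supporting lines, as happens with the limiting vertical tangent $x=a$ or with the branch $b=-\infty$ in \Cref{prop:y<0}, I will work with the closure $\overline{K}$. It is still a sector of angle at most $\pi$ with at most two bounding rays, so the bound of two persists. The only exception would be a half-plane cone bounded by a full line. That would give one supporting line, not more, so the bound still holds.

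For the consequence, I take $C$ to be the epigraph of $f$, extended to $+\infty$ beyond the ends of the domain; it is closed and convex by \Cref{lem:convexity} and \Cref{cor:convex-extension}. Every tangent line to the graph of the convex function $f$, including the endpoint tangents $y=0$ and the limiting vertical line $x=a$, is a supporting line of $C$. If $\boldsymbol{q}\notin C$, the first part gives at most two supporting lines through $\boldsymbol{q}$, hence at most two tangents. If $\boldsymbol{q}$ lies in the interior of $C$, no supporting line passes through it, so there are no tangents. If $\boldsymbol{q}$ lies on the boundary, strict convexity leaves only the unique tangent at $\boldsymbol{q}$. I will only check that distinct tangency points give distinct lines, which holds because $f'$ is strictly increasing.
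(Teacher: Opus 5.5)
Your proposal is correct and is essentially the paper's proof. Your cone $K$ is the cone over the paper's radial image $R(C)$, and both arguments use strict separation to place the directions toward $C$ in an open semicircle, where they form an arc whose closure has at most two endpoints, and only an endpoint direction can carry a supporting line. Your handling of unattained boundary rays via $\overline{K}$, and of distinct tangency points giving distinct lines, spells out what the paper leaves implicit; one small correction is that the angular interval has length at most $\pi$, not strictly less (e.g.\ $C=\{y\ge1\}$, $\boldsymbol{q}=\boldsymbol{0}$).
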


  \begin{proof}
    Strict separation gives a vector $\boldsymbol{v}$ and $c>0$ such that
    \[
      \boldsymbol{v}^T(\boldsymbol{x}-\boldsymbol{q})\ge c
      \qquad\text{for all }\boldsymbol{x}\in C.
    \]
    Hence the radial projection
    \[
      R(\boldsymbol{x})=
      \frac{\boldsymbol{x}-\boldsymbol{q}}
      {\|\boldsymbol{x}-\boldsymbol{q}\|}
    \]
    maps $C$ into an open semicircle of the unit circle. The map $R$ is
    continuous on $C$, and $C$ is connected because it is convex; hence
    $R(C)$ is connected. Every connected subset of an open semicircle is an
    arc, possibly degenerate, whose closure has at most two endpoints.

    A supporting line through $\boldsymbol{q}$ must correspond to an endpoint
    direction of this arc; an interior direction would place points of $C$ on
    both sides of the line. Since a direction and $\boldsymbol{q}$ determine a
    unique line, at most two supporting lines pass through $\boldsymbol{q}$.
    Every tangent line of a convex set is a supporting line, which proves the
    final assertion.
  \end{proof}

  \subsection*{Proof of \Cref{prop:all-points}}

  \begin{proof}
    Let $F=f$ when $b=-\infty$, and otherwise let $F$ be the extension in
    \Cref{eq:F(x)}. The closure of the epigraph of $F$ is convex, and every
    tangent line considered here is one of its supporting lines. If
    $\boldsymbol{q}$ lies outside this epigraph, the above \Cref{lem:aux-lemma} gives
    at most two such lines; an interior point lies on none. At a boundary point
    the supporting line is unique: it is the graph tangent, the horizontal
    tangent along the constant extension, or the limiting vertical tangent at
    $x=a$. This proves the general bound.

    For the special case, an interior tangent at $x_0\in(b,a)$ meets the
    $x$-axis at
    \begin{equation}
      \label{eq:supp-zero-of-tangent}
      x=\phi(x_0)=x_0-\frac{f(x_0)}{f'(x_0)}.
    \end{equation}
    Since $f(x_0)>0$, we have $\phi(x_0)<x_0<a$. If $b$ is finite, strict
    increase of $f'$ gives $f(x_0)<f'(x_0)(x_0-b)$, hence
    $\phi(x_0)>b$. Thus every interior tangent meets the $x$-axis in $(b,a)$.
    Outside this interval, the only possible tangent is the horizontal endpoint
    tangent $y=0$ when $b> -\infty$, proving the sharper bound.
  \end{proof}

  \section{Exhaustive enumeration of the tangency bounds}
  \label{sec:supp-tangency-enumeration}

  This section organizes the exhaustive verification underlying
  \Cref{lem:global-tangency-count,thm:main-prime}. For every set in the plane
  subdivisions of \Cref{fig:plane-division}, the four segment-wise bounds
  $u_i(\boldsymbol{q})$ from \Cref{tab:tangents} are combined with the junction
  correction $\delta=\sum_{j=1}^3\delta_j(\boldsymbol{q})$ to give
  $\widehat N=\sum_{i=1}^4u_i-\delta$, an upper bound for the exact tangency
  count $N(\boldsymbol{q})$.

  \Cref{tab:tangents_all_case} gives the row-by-row verification. Each row
  identifies an open region, relative boundary interior, or intersection point
  and records the applicable case, the four component bounds, the total junction
  correction, and the resulting value of $\widehat N$.
  For example, $\Omega_{15}$ has $(u_1,u_2,u_3,u_4)=(2,0,0,1)$ and
  $\delta=0$, whereas $\Omega_{15}\cap\Omega_{16}$ has $(2,1,0,1)$ and
  $\delta=1$; both give $\widehat N=3$.

  The listed sets cover the whole plane. Primed regions occur only in their
  corresponding configurations, and the two entries for
  $\boldsymbol{e}(s_{\pi/2})$ distinguish Case A from Cases B and C. Equality
  in either defining comparison merely collapses adjacent sets, which remain
  covered by the same non-strict boundary bounds.

  For comparison, \Cref{fig:stationary-heatmaps} shows the exact stationary-point
  counts obtained on sampled planar grids for the three configurations. These
  counts are consistent with the analytic upper bounds enumerated below.

  \begin{figure}[htbp]
    \centering
    \begin{subfigure}{0.32\textwidth}
      \includegraphics[width=\textwidth]{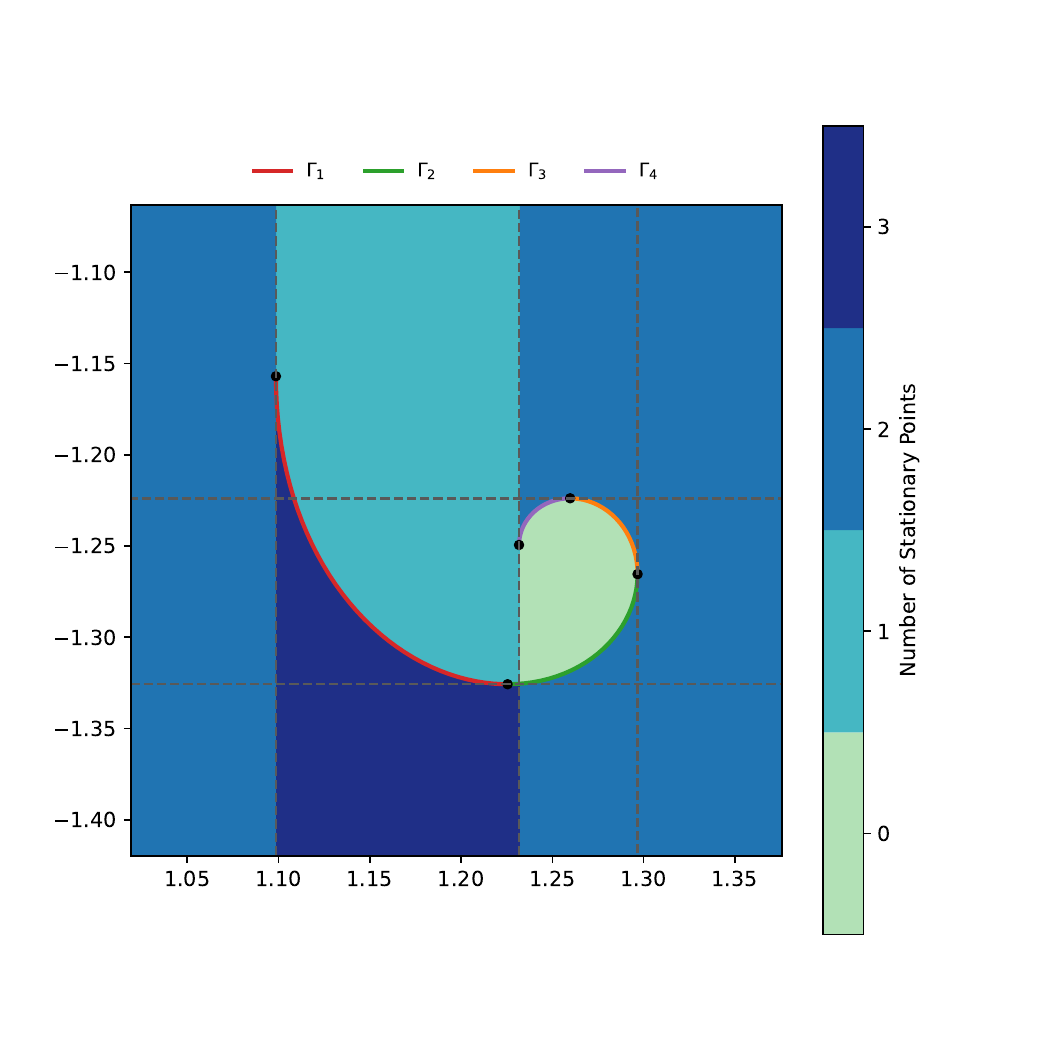}
      \caption{Case A}
    \end{subfigure}\hfill
    \begin{subfigure}{0.32\textwidth}
      \includegraphics[width=\textwidth]{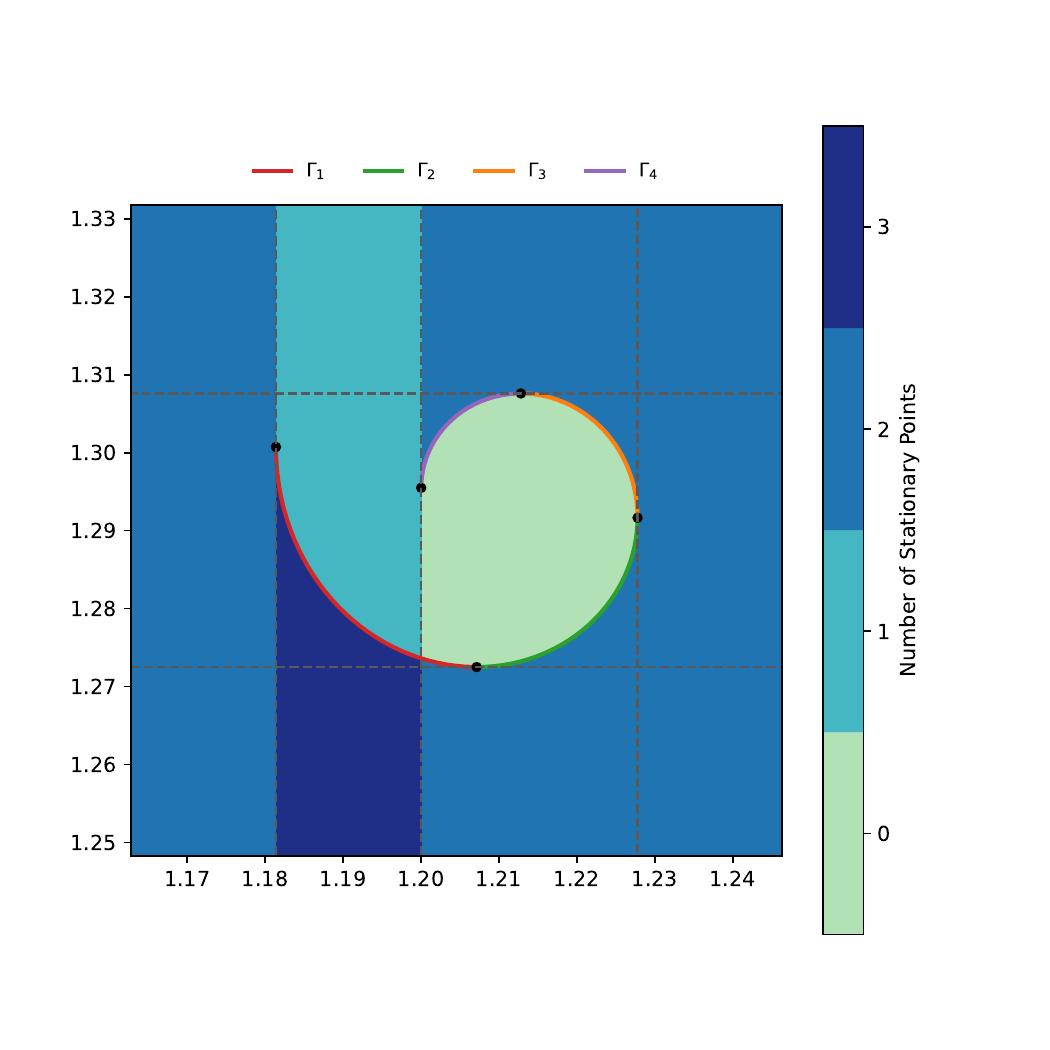}
      \caption{Case B}
    \end{subfigure}\hfill
    \begin{subfigure}{0.32\textwidth}
      \includegraphics[width=\textwidth]{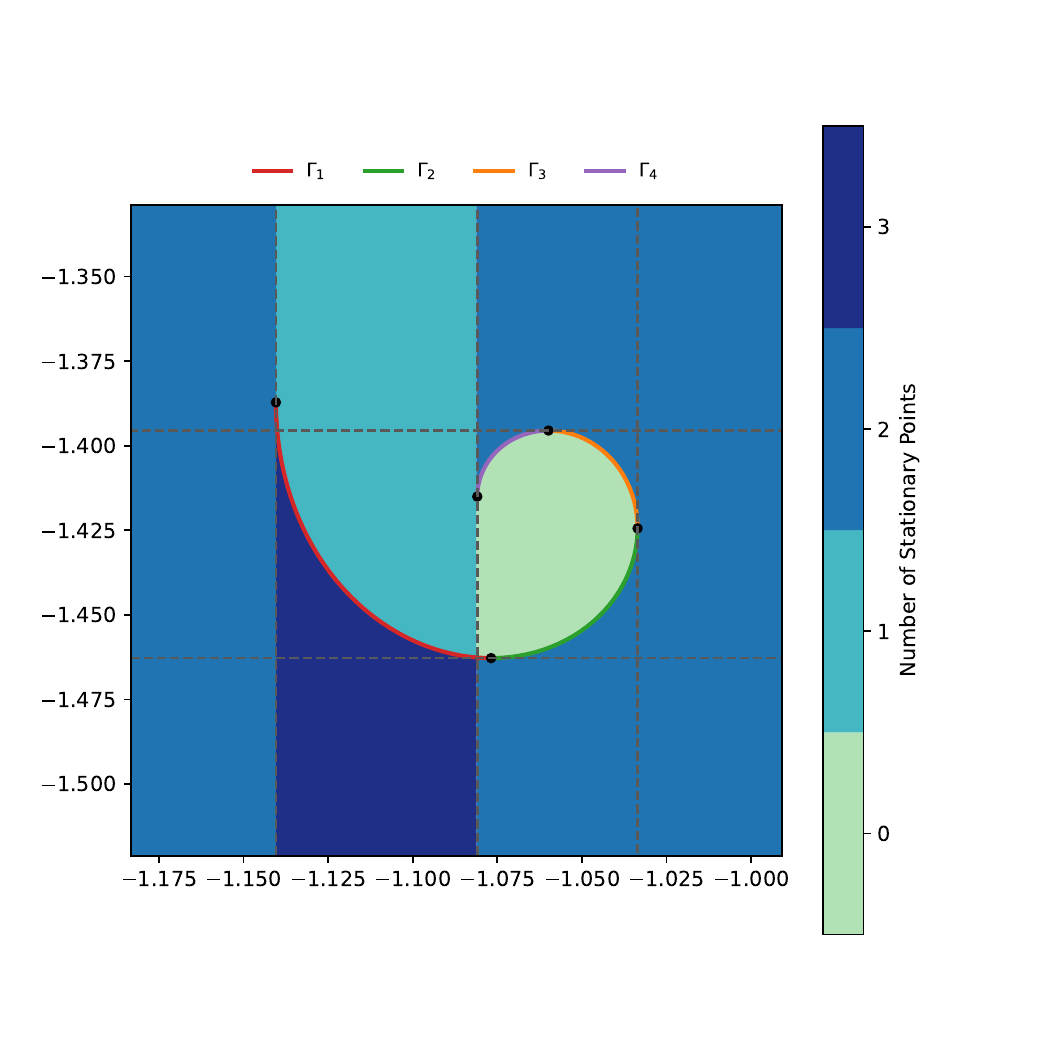}
      \caption{Case C}
    \end{subfigure}
    \caption{Number of stationary points of the squared-distance function at
      sampled points of the plane, equivalently, the number of evolute tangency
      parameters. The sampled counts do not exceed the analytic upper bounds in
    \Cref{tab:tangents_all_case}.}
    \label{fig:stationary-heatmaps}
  \end{figure}

  \footnotesize
  \setlength{\tabcolsep}{3.5pt}
  \begin{longtable}{|c|c|c|c|c|c|}
    \caption{Definition of closed region $\Omega_{i}$. The four-tuple of segmentwise regions defines a certain region $\Omega_{i}$ in the plane, where $A^{(i)}_j$ denote the region of $A_j$ for segment $\Gamma_i$ defined in Table~1. Note that for the same $\Gamma_{i}$, $A_{4}$ and $A_{5}$ share equivalent status, so $A_4^{(i)}$ denote both $A_{4}$ and $A_{5}$ in the table. }
    \label{tab:omega_tuples}\\
    \hline
    \textbf{Case} & \textbf{Region} & $\boldsymbol{(A^{(1)},A^{(2)},A^{(3)},A^{(4)})}$ & \textbf{Case} & \textbf{Region} & $\boldsymbol{(A^{(1)},A^{(2)},A^{(3)},A^{(4)})}$ \\ \hline
    \endfirsthead
    \multicolumn{6}{c}{\tablename~\thetable\ (continued)}\\
    \hline
    \textbf{Case} & \textbf{Region} & $\boldsymbol{(A^{(1)}_{j_1},A^{(2)}_{j_2},A^{(3)}_{j_3},A^{(4)}_{j_4})}$ & \textbf{Case} & \textbf{Region} & $\boldsymbol{(A^{(1)}_{j_1},A^{(2)}_{j_2},A^{(3)}_{j_3},A^{(4)}_{j_4})}$ \\ \hline
    \endhead
    \hline
    \multicolumn{6}{r}{Continued on the next page}\\
    \endfoot
    \hline
    \endlastfoot
    A--C & $\Omega_{1}$   & $(A^{(1)}_{1},A^{(2)}_{1},A^{(3)}_{1},A^{(4)}_{1})$ & A--C & $\Omega_{11}$  & $(A^{(1)}_{1},A^{(2)}_{3},A^{(3)}_{4},A^{(4)}_{1})$ \\
    A--C & $\Omega_{2}$   & $(A^{(1)}_{1},A^{(2)}_{1},A^{(3)}_{1},A^{(4)}_{4})$ & A--C & $\Omega_{12}$  & $(A^{(1)}_{4},A^{(2)}_{1},A^{(3)}_{1},A^{(4)}_{4})$ \\
    A--C & $\Omega_{3}$   & $(A^{(1)}_{1},A^{(2)}_{1},A^{(3)}_{4},A^{(4)}_{3})$ & A--C & $\Omega_{13}$  & $(A^{(1)}_{4},A^{(2)}_{1},A^{(3)}_{4},A^{(4)}_{3})$ \\
    A--C & $\Omega_{4}$   & $(A^{(1)}_{1},A^{(2)}_{1},A^{(3)}_{1},A^{(4)}_{2})$ & A--C & $\Omega_{14}$  & $(A^{(1)}_{3},A^{(2)}_{4},A^{(3)}_{1},A^{(4)}_{4})$ \\
    A--C & $\Omega_{5}$ & $(A^{(1)}_{1},A^{(2)}_{1},A^{(3)}_{2},A^{(4)}_{1})$ & A--C & $\Omega_{15}$  & $(A^{(1)}_{2},A^{(2)}_{1},A^{(3)}_{1},A^{(4)}_{4})$ \\
    A--C & $\Omega_{6}$   & $(A^{(1)}_{1},A^{(2)}_{2},A^{(3)}_{1},A^{(4)}_{1})$ & A--C & $\Omega_{16}$  & $(A^{(1)}_{4},A^{(2)}_{4},A^{(3)}_{1},A^{(4)}_{4})$ \\
    A--C & $\Omega_{7}$   & $(A^{(1)}_{1},A^{(2)}_{1},A^{(3)}_{4},A^{(4)}_{4})$ & A,C  & $\Omega_{15}'$ & $(A^{(1)}_{2},A^{(2)}_{1},A^{(3)}_{4},A^{(4)}_{3})$ \\
    A--C & $\Omega_{8}$   & $(A^{(1)}_{1},A^{(2)}_{4},A^{(3)}_{4},A^{(4)}_{1})$ & A    & $\Omega_{6}'$  & $(A^{(1)}_{1},A^{(2)}_{2},A^{(3)}_{1},A^{(4)}_{4})$ \\
    A--C & $\Omega_{9}$   & $(A^{(1)}_{1},A^{(2)}_{4},A^{(3)}_{3},A^{(4)}_{4})$ & B,C  & $\Omega_{15}''$ & $(A^{(1)}_{2},A^{(2)}_{1},A^{(3)}_{1},A^{(4)}_{1})$ \\
    A--C & $\Omega_{10}$  & $(A^{(1)}_{4},A^{(2)}_{4},A^{(3)}_{1},A^{(4)}_{1})$ &      &                & \\ \hline
  \end{longtable}
  \normalsize

  \footnotesize
  \setlength{\tabcolsep}{3pt}
  \begin{longtable}{|c|c|p{0.36\textwidth}|c|c|c|c|c|c|}
    \caption{Verification worksheet for the upper bound
      $\widehat N(\boldsymbol{q})=\sum_{i=1}^{4}u_i(\boldsymbol{q})-\delta$
      on the number $N(\boldsymbol{q})$ of evolute tangency parameters. Each row
      identifies a region, boundary, or intersection point and records the
      applicable case, the four segment-wise bounds $u_i$, the junction correction
    $\delta$, and the resulting total bound. Each set in this table excludes its boundary: the Region rows give the cells $\Omega_j$ without their boundaries, the Boundary rows give the edges $\Omega_i\cap\Omega_j$ without their endpoints, and the endpoints appear only in the Point rows.}
    \label{tab:tangents_all_case}\\
    \hline
    \textbf{Case} & \textbf{Type} & \textbf{Set containing $\boldsymbol{q}$}
    & $\boldsymbol{u_1}$ & $\boldsymbol{u_2}$ & $\boldsymbol{u_3}$
    & $\boldsymbol{u_4}$ & $\boldsymbol{\delta}$ & $\boldsymbol{\widehat N}$ \\ \hline
    \endfirsthead
    \multicolumn{9}{c}{\tablename~\thetable\ (continued)}\\
    \hline
    \textbf{Case} & \textbf{Type} & \textbf{Set containing $\boldsymbol{q}$}
    & $\boldsymbol{u_1}$ & $\boldsymbol{u_2}$ & $\boldsymbol{u_3}$
    & $\boldsymbol{u_4}$ & $\boldsymbol{\delta}$ & $\boldsymbol{\widehat N}$ \\ \hline
    \endhead
    \hline
    \multicolumn{9}{r}{Continued on the next page}\\
    \endfoot
    \hline
    \endlastfoot
    A--C & Region & $\Omega_{1}$ & 0 & 0 & 0 & 0 & 0 & 0 \\
    A--C & Region & $\Omega_{2}$ & 0 & 0 & 0 & 1 & 0 & 1 \\
    A--C & Region & $\Omega_{3}$ & 0 & 0 & 1 & 0 & 0 & 1 \\
    A--C & Region & $\Omega_{4}$ & 0 & 0 & 0 & 2 & 0 & 2 \\
    A--C & Region & $\Omega_{5}$ & 0 & 0 & 2 & 0 & 0 & 2 \\
    A--C & Region & $\Omega_{6}$ & 0 & 2 & 0 & 0 & 0 & 2 \\
    A--C & Region & $\Omega_{7}$ & 0 & 0 & 1 & 1 & 0 & 2 \\
    A--C & Region & $\Omega_{8}$ & 0 & 1 & 1 & 0 & 0 & 2 \\
    A--C & Region & $\Omega_{9}$ & 0 & 1 & 0 & 1 & 0 & 2 \\
    A--C & Region & $\Omega_{10}$ & 1 & 1 & 0 & 0 & 0 & 2 \\
    A--C & Region & $\Omega_{11}$ & 1 & 0 & 1 & 0 & 0 & 2 \\
    A--C & Region & $\Omega_{12}$ & 1 & 0 & 0 & 1 & 0 & 2 \\
    A--C & Region & $\Omega_{13}$ & 1 & 0 & 1 & 0 & 0 & 2 \\
    A--C & Region & $\Omega_{14}$ & 0 & 1 & 0 & 1 & 0 & 2 \\
    A--C & Region & $\Omega_{15}$ & 2 & 0 & 0 & 1 & 0 & 3 \\
    A--C & Region & $\Omega_{16}$ & 1 & 1 & 0 & 1 & 0 & 3 \\
    A,C & Region & $\Omega_{15}'$ & 2 & 0 & 1 & 0 & 0 & 3 \\
    A & Region & $\Omega_{6}'$ & 0 & 2 & 0 & 1 & 0 & 3 \\
    B,C & Region & $\Omega_{15}''$ & 2 & 0 & 0 & 0 & 0 & 2 \\ \hline
    A--C & Boundary & $\Omega_{1}\cap\Omega_{4}$ & 0 & 0 & 0 & 1 & 0 & 1 \\
    A--C & Boundary & $\Omega_{1}\cap\Omega_{5}$ & 0 & 0 & 1 & 0 & 0 & 1 \\
    A--C & Boundary & $\Omega_{2}\cap\Omega_{1}$ & 0 & 0 & 0 & 1 & 0 & 1 \\
    A--C & Boundary & $\Omega_{2}\cap\Omega_{3}$ & 0 & 0 & 1 & 1 & 1 & 1 \\
    A--C & Boundary & $\Omega_{2}\cap\Omega_{4}$ & 0 & 0 & 0 & 2 & 0 & 2 \\
    A--C & Boundary & $\Omega_{3}\cap\Omega_{7}$ & 0 & 0 & 1 & 1 & 0 & 2 \\
    A--C & Boundary & $\Omega_{4}\cap\Omega_{7}$ & 0 & 0 & 1 & 2 & 1 & 2 \\
    A--C & Boundary & $\Omega_{5}\cap\Omega_{7}$ & 0 & 0 & 2 & 1 & 1 & 2 \\
    A--C & Boundary & $\Omega_{6}\cap\Omega_{1}$ & 0 & 1 & 0 & 0 & 0 & 1 \\
    A--C & Boundary & $\Omega_{6}\cap\Omega_{8}$ & 0 & 2 & 1 & 0 & 1 & 2 \\
    A--C & Boundary & $\Omega_{7}\cap\Omega_{9}$ & 0 & 1 & 1 & 1 & 1 & 2 \\
    A--C & Boundary & $\Omega_{8}\cap\Omega_{5}$ & 0 & 1 & 2 & 0 & 1 & 2 \\
    A--C & Boundary & $\Omega_{8}\cap\Omega_{9}$ & 0 & 1 & 1 & 1 & 1 & 2 \\
    A--C & Boundary & $\Omega_{10}\cap\Omega_{6}$ & 1 & 2 & 0 & 0 & 1 & 2 \\
    A--C & Boundary & $\Omega_{10}\cap\Omega_{11}$ & 1 & 1 & 1 & 0 & 1 & 2 \\
    A--C & Boundary & $\Omega_{11}\cap\Omega_{8}$ & 1 & 1 & 1 & 0 & 1 & 2 \\
    A--C & Boundary & $\Omega_{12}\cap\Omega_{13}$ & 1 & 0 & 1 & 1 & 1 & 2 \\
    A--C & Boundary & $\Omega_{12}\cap\Omega_{15}$ & 2 & 0 & 0 & 1 & 0 & 3 \\
    A--C & Boundary & $\Omega_{13}\cap\Omega_{3}$ & 1 & 0 & 1 & 0 & 0 & 2 \\
    A--C & Boundary & $\Omega_{14}\cap\Omega_{12}$ & 1 & 1 & 0 & 1 & 1 & 2 \\
    A--C & Boundary & $\Omega_{14}\cap\Omega_{16}$ & 1 & 1 & 0 & 1 & 0 & 3 \\
    A--C & Boundary & $\Omega_{15}\cap\Omega_{2}$ & 1 & 0 & 0 & 1 & 0 & 2 \\
    A--C & Boundary & $\Omega_{16}\cap\Omega_{10}$ & 1 & 1 & 0 & 1 & 0 & 3 \\
    A--C & Boundary & $\Omega_{16}\cap\Omega_{15}$ & 2 & 1 & 0 & 1 & 1 & 3 \\
    A,C & Boundary & $\Omega_{15}'\cap\Omega_{13}$ & 2 & 0 & 1 & 0 & 0 & 3 \\
    A,C & Boundary & $\Omega_{15}'\cap\Omega_{3}$ & 1 & 0 & 1 & 0 & 0 & 2 \\
    A,C & Boundary & $\Omega_{15}'\cap\Omega_{15}$ & 2 & 0 & 1 & 1 & 1 & 3 \\
    A & Boundary & $\Omega_{6}'\cap\Omega_{2}$ & 0 & 1 & 0 & 1 & 0 & 2 \\
    A & Boundary & $\Omega_{6}'\cap\Omega_{6}$ & 0 & 2 & 0 & 1 & 0 & 3 \\
    A & Boundary & $\Omega_{6}'\cap\Omega_{16}$ & 1 & 2 & 0 & 1 & 1 & 3 \\
    B,C & Boundary & $\Omega_{15}''\cap\Omega_{10}$ & 2 & 1 & 0 & 0 & 1 & 2 \\
    B,C & Boundary & $\Omega_{15}''\cap\Omega_{15}$ & 2 & 0 & 0 & 1 & 0 & 3 \\
    B,C & Boundary & $\Omega_{15}''\cap\Omega_{1}$ & 1 & 0 & 0 & 0 & 0 & 1 \\ \hline
    A,C & Point & $\boldsymbol{e}(s_{0})$ & 1 & 0 & 1 & 0 & 0 & 2 \\
    B & Point & $\boldsymbol{e}(s_{0})$ & 1 & 0 & 0 & 1 & 0 & 2 \\
    A--C & Point & $\boldsymbol{e}(s_{\pi})$ & 0 & 1 & 1 & 0 & 1 & 1 \\
    A--C & Point & $\boldsymbol{e}(s_{3\pi/2})$ & 0 & 0 & 1 & 1 & 1 & 1 \\
    A--C & Point & $\boldsymbol{e}(s_{2\pi})$ & 0 & 0 & 0 & 1 & 0 & 1 \\
    A--C & Point & $\Omega_{12}\cap\Omega_{14}\cap\Omega_{15}\cap\Omega_{16}$ & 2 & 1 & 0 & 1 & 1 & 3 \\
    B & Point & $\Omega_{3}\cap\Omega_{12}\cap\Omega_{13}\cap\Omega_{2}$ & 1 & 0 & 1 & 1 & 1 & 2 \\
    A--C & Point & $\Omega_{6}\cap\Omega_{8}\cap\Omega_{10}\cap\Omega_{11}$ & 1 & 2 & 1 & 0 & 2 & 2 \\
    A--C & Point & $\Omega_{5}\cap\Omega_{7}\cap\Omega_{8}\cap\Omega_{9}$ & 0 & 1 & 2 & 1 & 2 & 2 \\
    A--C & Point & $\Omega_{2}\cap\Omega_{3}\cap\Omega_{4}\cap\Omega_{7}$ & 0 & 0 & 1 & 2 & 1 & 2 \\
    B,C & Point & $\boldsymbol{e}(s_{\pi/2})$ & 1 & 1 & 0 & 0 & 1 & 1 \\
    A & Point & $\boldsymbol{e}(s_{\pi/2})$ & 1 & 1 & 0 & 1 & 1 & 2 \\
    A,C & Point & $\Omega_{15}'\cap\Omega_{15}\cap\Omega_{12}\cap\Omega_{13}$ & 2 & 0 & 1 & 1 & 1 & 3 \\
    A,C & Point & $\Omega_{15}'\cap\Omega_{15}\cap\Omega_{2}\cap\Omega_{3}$ & 1 & 0 & 1 & 1 & 1 & 2 \\
    A & Point & $\Omega_{6}'\cap\Omega_{6}\cap\Omega_{1}\cap\Omega_{2}$ & 0 & 1 & 0 & 1 & 0 & 2 \\
    A & Point & $\Omega_{6}'\cap\Omega_{6}\cap\Omega_{16}\cap\Omega_{10}$ & 1 & 2 & 0 & 1 & 1 & 3 \\
    B,C & Point & $\Omega_{15}''\cap\Omega_{15}\cap\Omega_{1}\cap\Omega_{2}$ & 1 & 0 & 0 & 1 & 0 & 2 \\
    B,C & Point & $\Omega_{15}''\cap\Omega_{15}\cap\Omega_{16}\cap\Omega_{10}$ & 2 & 1 & 0 & 1 & 1 & 3 \\
  \end{longtable}

  \normalsize
  Every value of $\widehat N$ in \Cref{tab:tangents_all_case} is at
  most three. Furthermore, the values equal to three lie in the strip
  $x(0)\leq q_x\leq x(s_{2\pi})$, as asserted in
  \Cref{lem:global-tangency-count}.

  \section{The Optimized Algorithm}
  \Cref{alg:candidate-selection} states the resulting reduced
  candidate-selection procedure. In contrast to the original distance
  algorithm, the midpoint initialization is invoked only as a numerical
  fallback; when neither endpoint derivative test activates, \cref{min-max-min} guarantees that the global minimum is attained at an endpoint, and no interior search is needed.
  \par\medskip
  \begingroup
  \refstepcounter{algorithm}
  \label{alg:candidate-selection}
  \noindent\hrule height .8pt depth 0pt \kern 2pt
  \noindent\textbf{Algorithm~\thealgorithm} Candidate selection with numerical fallback
  \par\kern 2pt \hrule \kern 2pt \nopagebreak
  \begin{algorithmic}[1]
  \REQUIRE a proper no-inflection clothoid segment $p:[0,L]\to\mathbb{R}^2$, a query point $q$, an endpoint iteration $\mathcal{S}$
  \ENSURE a candidate $s_\star$ for the global minimum of $D(s)=\tfrac12\|p(s)-q\|^2$ on $[0,L]$, or \textsc{NumericalFailure}
  \IF{$D(0)=0$} \RETURN $0$ \ENDIF
  \IF{$D(L)=0$} \RETURN $L$ \ENDIF
  \STATE $s_0\gets0$;\quad $s_1\gets L$;\quad $\mathrm{fb}\gets\mathrm{false}$
  \STATE $f_0\gets\bigl(D'(0)<0\bigr)$;\quad $f_1\gets\bigl(D'(L)>0\bigr)$
    \COMMENT{endpoint activity tests}
  \IF{$f_0$}
    \STATE $[s_0,f_0]\gets\mathcal{S}(0)$;\quad $\mathrm{fb}\gets\mathrm{fb}\lor\neg f_0$
  \ENDIF
  \IF{$f_1$}
    \STATE $[s_1,f_1]\gets\mathcal{S}(L)$;\quad $\mathrm{fb}\gets\mathrm{fb}\lor\neg f_1$
  \ENDIF
  \IF{$\mathrm{fb}$}
    \STATE $[s_m,f_m]\gets\mathcal{S}(L/2)$ \COMMENT{A numerical fallback as the iterations fail}
    \IF{$f_m$}
      \RETURN $\arg\min_{s\in\{0,L,s_0,s_1,s_m\}}D(s)$
    \ELSE
      \RETURN \textsc{NumericalFailure}
    \ENDIF
  \ENDIF
  \RETURN $\arg\min_{s\in\{0,L,s_0,s_1\}}D(s)$
    \COMMENT{reduces to $\{0,L\}$ when $f_0=f_1=\mathrm{false}$ by \cref{min-max-min}}
  \end{algorithmic}
  \kern 2pt \hrule
  \endgroup
  \par\medskip
  \normalsize

  \bibliographystyle{siamplain}
  \bibliography{references}

  \end{document}